\newcounter{mnote}
\newtheorem{thm}{Theorem}[section]
\newtheorem{prop}[thm]{Proposition}
\newtheorem{lem}[thm]{Lemma}
\newtheorem{cor}[thm]{Corollary}
\theoremstyle{definition}
\newtheorem{definition}[thm]{Definition}
\theoremstyle{definition}
\newtheorem{remark}[thm]{Remark}
\numberwithin{equation}{section}
\newcommand{\uh}{\hat{u}}
\newcommand{\vh}{\hat{v}}
\newcommand{\ko}{\kappa_0}
\newcommand{\Rt}[1]{\tilde{{R}}_{#1}}
\def\k0{\kappa_0}
\def\cA{\mathcal{A}}
\def\CCG{\mathcal{C}^2\mathcal{G}}
\begin{document}

\title[On whether zero is in the global attractor of the 2D-NSEs]{On
  whether zero is in the global attractor of the 2D Navier-Stokes equations}
\date{\today}
\author[C. Foias]{Ciprian Foias$^{1}$}
\address{$^1$Department of Mathematics\\
Texas A\&M University\\ College Station, TX 77843}
\author[M.S. Jolly]{Michael S. Jolly$^{2}$}
\address{$^2$Department of Mathematics\\
Indiana University\\ Bloomington, IN 47405}
\author[Y. Yang]{Yong Yang$^{1,\dagger}$}
\author[B. Zhang]{Bingsheng Zhang$^{1}$}
\address{$\dagger$ corresponding author}
\email[C. Foias]{foias@math.tamu.edu}
\email[M. S. Jolly]{msjolly@indiana.edu}
\email[Y. Yang] {yytamu@math.tamu.edu}
\email[B. Zhang] {bszhang@math.tamu.edu}
\thanks{This work was supported in part by NSF grant numbers DMS-1109638, and DMS-1109784}

%
%
\subjclass[2010]{35Q30,76D05,34G20,37L05, 37L25}
\keywords{Navier-Stokes equations, global attractor, analyticity in time}
\begin{abstract}
The set of nonzero external forces for which the zero function is in the 
global attractor of the 2D Navier-Stokes equations is shown to be meagre in a Fr\'echet topology.
A criterion in terms of a Taylor expansion in complex time is used to characterize 
the forces in this set.   This leads to several relations between certain Gevrey subclasses of $C^{\infty}$
and a new upper bound for a Gevrey norm of solutions in the attractor, valid in the strip of analyticity in time.
\end{abstract}
\maketitle

\section{Introduction}
A challenge posed by P. Constantin \cite{PC03} is to find a simple proof that zero is
in the global attractor $\mathcal{A}_g$ of the 2D Navier-Stokes equations (NSE) if and only if the external force $g$ is zero. A related and perhaps equally challenging problem is to find sharp lower bounds on the energy in cases where we know $0\nin\mathcal{A}_g$. A 
bound which is probably far from sharp can be found in \cite{DFJ05}.  Such a lower bound 
can have implications for turbulent flow because a direct cascade of energy is indicated
by a large quotient of average enstrophy to average energy \cite{FJMR}.   

In this
paper, we show that the set of nonzero forces for which $0 \in \cA_g$ is meagre (of the first Baire 
category in a Fr\'echet topology (see Theorem \ref{t73})). In doing so we establish several relations between certain Gevrey subclasses of $C^{\infty}$
(see Theorem \ref{t52}, Corollary \ref{c54}).  We also prove a new upper bound for a Gevrey norm
of solutions in the attractor, valid for all $\zeta$ in the strip $\mathcal{S}(\delta)$
of time-analyticity (see Theorem \ref{t63}).  Moreover, by using complex time analytic techniques
from \cite{FJK96}, we present a concrete criterion that is both
sufficient and necessary for $0 \in \mathcal{A}_g$. 
We demonstrate the use of this criterion to prove that zero is not in the global attractor
in the particular case of Kolmogorov forcing (where $g$ is in an
eigenfunction of the Stokes operator).


\section{General preliminaries}

We consider the Navier-Stokes equations with periodic boundary conditions in $\Omega=[0, L]^2$
\begin{equation}\label{NSEs}
\begin{array}{rl}

\frac{\partial u}{\partial t}-\nu \Delta u + (u \cdot \nabla) u + \nabla p = F,\\

\nabla \cdot u=0, \\

u(x,0)=u_{0}(x), \\

\int_{\Omega} u\,dx=0,\qquad \int_{\Omega} F\, dx=0.
\end{array}
\end{equation}
where $u:\mathbb{R}^2\rightarrow \mathbb{R}^2$, $p:\mathbb{R}^2 \rightarrow \mathbb{R}$ are unknown $\Omega$-periodic functions, and $\nu>0$ is the viscosity of the fluid, $L>0$ is the period, $p$ is the pressure, and $F$ is the ``body'' force (see \cite{T95}, \cite{CF89}, \cite{T97} for more details). The phase space $H$ is defined as the subspace of $[L^2(\Omega)]^2$ consisting of all elments in the closure of the set of $\mathbb{R}^2$-valued trigometric polynomials $v$ satisfying
\begin{equation*}
\nabla \cdot v=0  \ \text{and}   \int_{\Omega}v(x)dx=0.
\end{equation*}
The scalar product in $H$ is taken to be
\begin{equation*}
(u,v)=\int_{\Omega}u(x)\cdot v(x)dx
\end{equation*}
with associated norm $|u|=(u,u)^{1/2}$.

Let $\mathcal{P}:[L^2(\Omega)]^2 \rightarrow [L^2(\Omega)]^2$ be the orthogonal projection (called the Helmholtz-Leray projection) with range $H$, and  define the Stokes operator $A=-\mathcal{P}\Delta$ (=$-\Delta$, under periodic boundary conditions), which is positive, self-adjoint with a compact inverse. As a consequence, the real Hilbert space $H$ has an orthonormal basis $\{\omega_j\}^{\infty}_{j=1}$ eigenfunctions of $A$, namely, $A\omega_j=\lambda_j \omega_j$ with $0<\lambda_{1}=\big(\frac{2\pi}{L}\big)^2\leq \lambda_{2}\leq \lambda_{3}< \cdots$. The powers $A^{\sigma}$ are defined by 
\begin{equation*}
A^{\sigma}v=\sum_{j=1}^{\infty} \lambda_j^{\sigma}(v,\omega_j)\omega_j,  \quad \sigma \in \mathbb{R},
\end{equation*}
where $(\cdot, \cdot)$ is the $L^2-$ scalar product. The domain of $A^{\sigma}$ is denoted by $\mathcal{D}(A^\sigma).$

The system  \eqref{NSEs} can be written as a differential equation 
\begin{equation}
\label{35}
\frac{du}{dt}+\nu Au+B(u,u)=g, \quad u \in H.
\end{equation}
where the bilinear operator $B$ and the driving force $g$ are defined as $B(u,v)=\mathcal{P}((u\cdot\nabla)v)$ and $g=\mathcal{P}F$, respectively.

 Under periodic boundary conditions, we may express an element $u \in H $ as a Fourier series expansion 
\begin{equation*}
u(x)=\sum_{k \in \mathbb{Z}^2 \setminus \{0\}}\hat{u}(k)e^{i\kappa_{0}k\cdot x},
\end{equation*}
where 
$\kappa_{0}=\lambda_{1}^{1/2}=\frac{2\pi}{L},\ \hat{u}(0)=0,\ (\hat{u}(k))^{\ast}=\hat{u}(-k),\ k\cdot\hat{u}(k)=0.$
Parseval's identity reads as
\begin{equation*}
|u|^2=L^2\sum_{k\in\mathbb{Z}^2\setminus \{0\}}\hat{u}(k)\cdot \hat{u}(-k)=L^2\sum_{k\in\mathbb{Z}^2\setminus\{0\}}|\hat{u}(k)|^2,
\end{equation*}
or more generally
\begin{equation*}
(u,v)=L^2\sum_{k\in\mathbb{Z}^2\setminus \{0\}}\hat{u}(k)\cdot \hat{v}(-k).
\end{equation*}

The following inequalities will be repeatedly used in this paper
\begin{equation}
\label{Poineq}
\kappa_{0}|u|\leq|A^{1/2}u|, \text{for} \ u\in \mathcal{D}(A^{1/2}),
\end{equation}
\begin{equation}
\label{Laineq}
|u|_{L^{4}(\Omega)}\leq \sqrt{2}c_{L}|u|^{1/2}|A^{1/2}u|^{1/2}, \text{for} \ u\in \mathcal{D}(A^{1/2}),
\end{equation}
\begin{equation}
\label{Agineq}
|u|_{\infty}\leq c_{A}|u|^{1/2}|Au|^{1/2}, \text{for} \ u\in \mathcal{D}(A).\\
\end{equation}
known respectively as the Poincar\'{e}, Ladyzhenskaya and Agmon inequalities. Both $c_{L}$ and $c_{A}$ are absolute constants.

We recall that the global attractor $\mathcal{A}$ of the NSE is the collection of all elements $u_{0}$ in $H$ for which there exists a solution $u(t)$ of the NSE, for all $t\in \mathbb{R}$, such that $u(0)=u_{0}$ and $\sup_{t\in \mathbb{R}}|u(t)|<\infty$.

To give another definition of $\mathcal{A}$, we need to recall several concepts. First, as is well-known, for any $u_{0}\in H$, $f\in H$, there exists a unique continuous function $u$ from $[0,\infty)$ to $H$ such that $u(0)=u_{0}$, $u(t)\in \mathcal{D}(A)$, $t\in (0,\infty)$, and $u$ satisfies the NSE for all $t\in(0,\infty)$. Therefore, one can define the map $S(t):H\rightarrow H$ by
\begin{equation}
\label{311}
S(t)u_{0}=u(t),\\
\end{equation}
where $u(\cdot)$ is as above. Since $S(t_{1})S(t_{2})=S(t_{1}+t_{2})$, the family $\{S(t)\}_{t\geq 0}$  is called the ``solution'' semigroup. Furthermore, a compact set $\mathcal{B}$ is called absorbing if for any bounded set $\tilde{\mathcal{B}}\subset H$ there is a time $\tilde{t}=\tilde{t}(\tilde{\mathcal{B}})\geq 0$ such that $S(t)\tilde{\mathcal{B}}\subset \mathcal{B}$ for all $t \geq \tilde{t}$. The attractor can be now defined by the formula
\begin{equation}
\label{312}
\mathcal{A}=\bigcap_{t\geq 0} S(t)\mathcal{B},
\end{equation}
where $\mathcal{B}$ is any absorbing compact subset of $H$.

We now consider the NSE with complexified time and the corresponding solutions in $H_{\mathbb{C}}$ as in \cite{FHN07} and \cite{FJK96}. We recall that 
\begin{equation*}
H_{\mathbb{C}}=\{u+iv: u, v\in H\},
\end{equation*}
and that $H_{\mathbb{C}}$ is a Hilbert space with respect to the following inner product
\begin{equation*}
(u+iv,u'+iv')_{H_{\mathbb{C}}}=(u,u')_{H}+(v,v')_{H}+i[(v,u')_{H}-(u,v')_{H}],
\end{equation*}
where $u,u',v,v' \in H$. The extension $A_{\mathbb{C}}$ of $A$ is given by 
\begin{equation*}
A_{\mathbb{C}}(u+iv)=Au+iAv,
\end{equation*}
for $u,v \in \mathcal{D}(A)$; thus $\mathcal{D}(A_{\mathbb{C}})=\mathcal{D}(A)_{\mathbb{C}}$. Similarly, $B(\cdot,\cdot)$ can be extended to a bounded bilinear operator from $\mathcal{D}(A_{\mathbb{C}}^{1/2}) \times \mathcal{D}(A_{\mathbb{C}})$ to $H_{\mathbb{C}}$ by the formula
\begin{equation*}
B_{\mathbb{C}}(u+iv,u'+iv')=B(u,u')-B(v,v')+i[B(u,v')+B(v,u')],
\end{equation*}
for $u,v \in \mathcal{D}(A^{1/2}),u',v' \in \mathcal{D}(A)$.

The Navier-Stokes equation in complex time is defined as
\begin{equation}
\label{cnse}
\frac{du(\zeta)}{d\zeta}+\nu A_{\mathbb{C}}u(\zeta)+B_{\mathbb{C}}(u(\zeta),u(\zeta))=g,
\end{equation}
where $\zeta \in \mathbb{C}$, $u(\zeta) \in H_{\mathbb{C}}$ and $\frac{du(\zeta)}{d\zeta}$ denotes the derivatives of $H_{\mathbb{C}}$-valued analytic function $u(\zeta)$.

\section{Specific preliminaries}

In this section, we first recall the definition of the class $\mathcal{C}(\sigma)$ introduced in our previous paper \cite{FJLRYZ13}. We also collect the properties regarding the class $\cup_{\sigma>0}\mathcal{C}(\sigma)$.
The class $\mathcal{C}(\sigma)$ is defined to be a  subset of $C^{\infty}([0,L]^2)\cap H$  for which every element $u\in  \mathcal{C}(\sigma)$ has a specified growth rate for the powers of the operator $A$ applied to $u$
\begin{align}\label{Csigdef}
\mathcal{C}(\sigma):=\{u\in C^{\infty}([0,L]^2)\cap H:\ \exists \  c_0 = c_0(u)\in \mathbb{R}, \frac{|A^{\frac{\alpha}{2}}u|^2}{\nu^2 \kappa_0^{2\alpha}}\leq c_0e^{\sigma \alpha^2}\;,
 \ \forall \alpha \in \mathbb{N}\}\;.
\end{align}

In this definition we allow only $\alpha \in \mathbb{N}$; however, as shown in Section 11 of \cite{FJLRYZ13}, we could actually extend this definition to allow $\alpha$ to take any real numbers without changing the class $\mathcal{C}(\sigma)$. We stress that the constant $c_0 \in \mathbb{R}$ in the definition of the class $\mathcal{C}(\sigma)$ depends on $u$.

To make our presentation more self-contained, we include some of the
relevant results from \cite{FJLRYZ13}. The first result gives some
consequences of zero belonging to the attractor.
\begin{thm}
If $0\in \mathcal{A}$, then both the attractor $\mathcal{A}$ and the force $g$ will be in the class $\mathcal{C}(\sigma)$ ; namely
\begin{equation}
\label{bf831}
0\in \mathcal{A} \Rightarrow \mathcal{A}\subset \mathcal{C}(\sigma_0)\quad 
\textrm{and} \quad g\in \mathcal{C}(\sigma_1),
\end{equation}
for some $\sigma_1>\sigma_0>0$, where $\sigma_0$ and $\sigma_1$ both depend on the force $g$.
\end{thm}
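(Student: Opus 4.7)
The plan is to handle the two conclusions separately: the containment $\mathcal{A} \subset \mathcal{C}(\sigma_0)$ is essentially a restatement of Foias--Temam Gevrey regularity for solutions in the attractor and does not require the hypothesis $0 \in \mathcal{A}$, while $g \in \mathcal{C}(\sigma_1)$ uses the complex-time analyticity of \eqref{cnse} together with the existence of a complete trajectory through $0$.

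For the attractor part, I would first recall that invariance of $\mathcal{A}$ under $S(t)$, together with its boundedness in $\mathcal{D}(A^{1/2})$ (which depends only on $g$), yields through the standard Foias--Temam analyticity argument a uniform Gevrey bound
$$|e^{\tau A^{1/2}} u_0|^2 \leq K \quad \text{for all } u_0 \in \mathcal{A},$$
with $\tau, K > 0$ determined by $g$. Expanding in the eigenbasis $\{\omega_j\}$ and using $\sup_{x \geq 0} x^{2\alpha} e^{-2\tau x} = (\alpha/(\tau e))^{2\alpha}$ gives
$$\frac{|A^{\alpha/2} u_0|^2}{\nu^2 \kappa_0^{2\alpha}} \leq \frac{K}{\nu^2}\left(\frac{\alpha}{\tau \kappa_0 e}\right)^{2\alpha},$$
which by the elementary inequality $2\alpha \log \alpha \leq \sigma_0 \alpha^2$ (valid for $\alpha$ large and any prescribed $\sigma_0 > 0$) is dominated by $c_0 e^{\sigma_0 \alpha^2}$ after absorbing finitely many small-$\alpha$ terms into $c_0$. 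Hence $\mathcal{A} \subset \mathcal{C}(\sigma_0)$.

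For the force, the hypothesis $0 \in \mathcal{A}$ produces a complete trajectory $u: \mathbb{R} \to \mathcal{A}$ with $u(0) = 0$. The complex-time analyticity result of \cite{FJK96} applied to \eqref{cnse} extends $u$ to an $H_{\mathbb{C}}$-analytic function on some disk $\{|\zeta| \leq r\}$, with a uniform Gevrey bound $|e^{\tau A^{1/2}} u(\zeta)|^2 \leq K'$ on that disk. Evaluating \eqref{cnse} at $\zeta = 0$, the identities $A_{\mathbb{C}} u(0) = 0$ and $B_{\mathbb{C}}(u(0), u(0)) = 0$ collapse the equation to $u'(0) = g$. Cauchy's formula then yields
$$A^{\alpha/2} g = \frac{1}{2\pi i} \oint_{|\zeta| = r} \frac{A^{\alpha/2} u(\zeta)}{\zeta^2}\, d\zeta,$$
the interchange of $A^{\alpha/2}$ with the contour integral being justified by $u(\zeta) \in \mathcal{D}(A^{\alpha/2})$ on the circle (a consequence of the Gevrey bound). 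Applying the same eigenbasis calculation pointwise on $|\zeta| = r$ gives $|A^{\alpha/2} g|^2 / (\nu^2 \kappa_0^{2\alpha}) \leq c_1 r^{-2} e^{\sigma_1 \alpha^2}$, and we may arrange $\sigma_1 > \sigma_0$ using the monotonicity $\mathcal{C}(\sigma) \subset \mathcal{C}(\sigma')$ for $\sigma \leq \sigma'$.

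The main obstacle is establishing the uniform complex-time Gevrey bound $|e^{\tau A^{1/2}} u(\zeta)|^2 \leq K'$ on the full disk $|\zeta| \leq r$, rather than only at real $\zeta$, with constants controlled solely by the attractor (hence by $g$). This is the substantive input supplied by the complex-time analyticity framework developed in \cite{FJK96}; once in hand, the remaining Cauchy-estimate and eigenbasis computations are elementary.
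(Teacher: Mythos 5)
The paper itself does not prove this theorem --- it is imported verbatim from \cite{FJLRYZ13} --- but your proposal has a genuine gap at its foundation. You assert that the containment $\mathcal{A}\subset\mathcal{C}(\sigma_0)$ ``does not require the hypothesis $0\in\mathcal{A}$'' and follows from a uniform Foias--Temam Gevrey bound $|e^{\tau A^{1/2}}u_0|^2\le K$ over the attractor with $\tau,K$ determined by $g$. That bound is not available for a general force: Foias--Temam Gevrey regularity requires $g$ itself to lie in a Gevrey class of the type $\mathcal{D}(e^{\sigma A^{1/2}})$, whereas here $g$ is only assumed to be in $H$ (or, in the setting of Section 6, in $C^\infty\cap H$). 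For $g\in (C^\infty\cap H)\setminus\bigcup_{\sigma>0}\mathcal{C}(\sigma)$ --- a nonempty set by Theorem \ref{th11-1} --- the attractor cannot lie in any $\mathcal{C}(\sigma_0)$ (a stationary solution $u\in\mathcal{A}$ would force $g=\nu Au+B(u,u)$ into a comparable class), so the first half of the theorem genuinely uses $0\in\mathcal{A}$. The same circularity infects your second half: the ``uniform complex-time Gevrey bound $|e^{\tau A^{1/2}}u(\zeta)|^2\le K'$'' that you defer to \cite{FJK96} is precisely the kind of estimate whose proof presupposes Gevrey regularity of $g$, i.e., presupposes the conclusion you are trying to reach.

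The actual mechanism behind \eqref{bf831} is a bootstrap that uses $0\in\mathcal{A}$ at every stage. Your observation that the complete trajectory through $0$ gives $g=u'(t_0)$, hence a Cauchy estimate $|A^{\alpha/2}g|\le r^{-1}\sup_{|\zeta-t_0|=r}|A^{\alpha/2}u(\zeta)|$, is correct and is the right starting point; but this only transfers to $g$ whatever bound on $|A^{\alpha/2}u(\zeta)|$ is \emph{currently known} on a complex neighborhood. That improved regularity of $g$ must then be fed back into the equation to raise the regularity of $u$ by one step, and the induction on $\alpha$ run with explicit constants and strip widths --- this is exactly what Theorem \ref{t64} and the recursive constants of the Appendix encode. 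Each step of the induction costs a factor of roughly $e^{c\alpha}$, which is why the outcome is only the growth $|A^{\alpha/2}u|^2\le c_0e^{\sigma\alpha^2}(\nu\kappa_0^{\alpha})^2$ defining $\mathcal{C}(\sigma)$ (equivalently, membership in the logarithmic-weight class $E_b$ of Section 5) and not spatial analyticity. Indeed, if your exponential Gevrey bound were available, your own computation with $\sup_{x\ge0}x^{2\alpha}e^{-2\tau x}=(\alpha/(\tau e))^{2\alpha}$ and $2\alpha\ln\alpha\le\sigma\alpha^2$ would yield $\mathcal{A}\subset\mathcal{C}(\sigma)$ for \emph{every} $\sigma>0$, a far stronger conclusion than the theorem asserts --- another sign that this cannot be the intended input. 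The elementary steps you carry out are fine; what is missing is the inductive construction of the $\alpha$-dependent bounds themselves.
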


In particular, we have the following estimates
\begin{thm}
\label{thm33}
\label{t64}
If $0\in\mathcal{A}$, then there exist fixed positive constants $\tilde{R}_1$, $\tilde{R}_2$, $\tilde{R}_3$, $C(g)$, $\beta_1$, $\beta_2$, $\delta_1$,  $\delta_2$, $\delta_3$ such that the analytic extension of any solution $u(t), \ t\in\mathbb{R}$ in $\mathcal{A}$ satisfies for
any $\alpha \in \mathbb{N}$
	\begin{equation*}
        |A^{\frac{\alpha+1}{2}}u(\zeta)|\leq \Rt{\alpha+1}\nu\ko^{\alpha+1},\quad\forall\  \zeta\in \mathcal{S}(\delta_\alpha) :=\{\zeta\in\mathbb{C}:|\Im(\zeta)|<\delta_\alpha\},
    \end{equation*}
where for $\alpha>3$,
	\begin{equation*}
		\Rt{\alpha+1}^2\leq C(g){\beta_1}^{4^{\alpha+1}}\beta_2^{(\alpha+1)^2+\frac{9}{2}(\alpha+1)}\;,
		\qquad \delta_\alpha=\delta_3\;.
	\end{equation*}
\end{thm}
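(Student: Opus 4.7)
The plan is to induct on $\alpha$, combining the previous theorem (whose conclusions $\mathcal{A}\subset\mathcal{C}(\sigma_0)$ and $g\in\mathcal{C}(\sigma_1)$ we may freely use) with the complex-time analytic techniques of \cite{FJK96}. Since $\mathcal{A}\subset\mathcal{C}(\sigma_0)$, every $u(t)\in\mathcal{A}$ already satisfies a uniform-in-$t$ bound for $|A^{\beta/2}u(t)|$ at every $\beta\in\mathbb{N}$, with a constant depending only on $g$ and $\nu$. Starting from this and proceeding by the standard energy/enstrophy method applied in complex time---taking the inner product of \eqref{cnse} with $A^k u$ for $k=0,1,2,3$ and estimating $B_{\mathbb{C}}$ by the Poincar\'e, Ladyzhenskaya and Agmon inequalities \eqref{Poineq}--\eqref{Agineq}---one obtains the base cases, yielding finite $\Rt{1},\Rt{2},\Rt{3}$ along with analyticity widths $\delta_1\ge\delta_2\ge\delta_3>0$ determined by $g$ and $\nu$ alone.

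For the inductive step, assume the bound at levels $\le\alpha$. Rewrite \eqref{cnse} as
\[
\nu A^{(\alpha+1)/2}u=A^{(\alpha-1)/2}g-A^{(\alpha-1)/2}\frac{du}{d\zeta}-A^{(\alpha-1)/2}B_{\mathbb{C}}(u,u),
\]
and bound the three pieces separately on $\mathcal{S}(\delta_3)$. The first term is controlled directly by $g\in\mathcal{C}(\sigma_1)$ and produces the Gevrey factor $\beta_2^{(\alpha+1)^2+(9/2)(\alpha+1)}$. For the time-derivative term, apply Cauchy's integral formula on two nested strips $\mathcal{S}(\delta_3)\subset\mathcal{S}(\delta_3+\eta)$ fixed once and for all, so that $|A^{(\alpha-1)/2}u_\zeta(\zeta)|$ at $\zeta\in\mathcal{S}(\delta_3)$ is dominated by the supremum of $|A^{(\alpha-1)/2}u|$ on the larger strip, at the cost of a factor $1/\eta$ independent of $\alpha$. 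For the bilinear term use a Leibniz/Fourier expansion of the form
\[
|A^{(\alpha-1)/2}B_{\mathbb{C}}(u,u)|\lesssim\sum_{j=0}^{\alpha-1}\binom{\alpha-1}{j}|A^{(j+a)/2}u|\,|A^{(\alpha-1-j+b)/2}u|,
\]
for small fixed offsets $a,b$ dictated by \eqref{Laineq}--\eqref{Agineq}, and invoke the inductive hypothesis on each factor. The quadratic nonlinearity then yields a recursion of roughly the shape $\Rt{\alpha+1}\le P(\alpha)\,\Rt{\alpha}^2$, whose iteration is exactly what produces the characteristic double-exponential factor $\beta_1^{4^{\alpha+1}}$.

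The technically delicate step---and the principal obstacle---is keeping the strip width constantly equal to $\delta_3$ for every $\alpha>3$; the naive strategy of shrinking $\delta_\alpha\downarrow 0$ at each inductive round would collapse the analytic extension in the limit. To avoid this, I would fix the nested pair $\mathcal{S}(\delta_3)\subset\mathcal{S}(\delta_3+\eta)$ at the outset and absorb \emph{all} of the Cauchy-formula losses together with the combinatorial $\binom{\alpha-1}{j}$-losses into the growing constant $\Rt{\alpha+1}$, using as slack the Gevrey gain inherited from $g\in\mathcal{C}(\sigma_1)$. Tracking this bookkeeping so that the final constants take precisely the claimed shape $\beta_1^{4^{\alpha+1}}\beta_2^{(\alpha+1)^2+(9/2)(\alpha+1)}$, and that the polynomial prefactors $P(\alpha)$ from the Leibniz sum and the $1/\eta$ loss both get swallowed by this double exponential, is the heart of the argument.
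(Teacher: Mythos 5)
First, note that the paper does not prove this theorem here: it is imported verbatim from \cite{FJLRYZ13} (only the explicit constants are recalled in the Appendix), so the only in-paper template for the argument is the proof of Theorem \ref{t63}, which proceeds by differential inequalities along rays $\zeta=t_0+\rho e^{i\theta}$, $\theta\in[-\pi/4,\pi/4]$, issued from \emph{every} real time $t_0$. That is the mechanism you are missing, and it is where your proposal has a genuine gap. You correctly identify the principal obstacle --- keeping $\delta_\alpha=\delta_3$ fixed for all $\alpha>3$ --- but your proposed resolution does not close. To bound $|A^{(\alpha-1)/2}u_\zeta|$ on $\mathcal{S}(\delta_3)$ by Cauchy's formula you need the level-$(\alpha-1)$ bound on the strictly larger strip $\mathcal{S}(\delta_3+\eta)$; but your elliptic identity $\nu A^{(\alpha+1)/2}u=A^{(\alpha-1)/2}g-A^{(\alpha-1)/2}u_\zeta-A^{(\alpha-1)/2}B(u,u)$ then delivers the level-$(\alpha+1)$ bound only on the \emph{smaller} strip. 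The induction therefore cannot re-establish the hypothesis on $\mathcal{S}(\delta_3+\eta)$ at the next level, and fixing the nested pair ``once and for all'' does not repair this: after one round the enlarged-strip information at the top order is gone, and you are back to shrinking strips. The way the cited proof (and Theorem \ref{t63} here) avoids this is different in kind: because $0\in\mathcal{A}$ forces $\mathcal{A}\subset\mathcal{C}(\sigma_0)$, one has a bound on $|A^{\alpha/2}u(t_0)|$ that is uniform over all real $t_0$ and quantitatively controlled in $\alpha$; taking the inner product of \eqref{cnse} with $A^{\alpha}u$ along each ray and using Lemma \ref{lemma61} plus Gronwall then gives an estimate on a strip whose width is bounded below independently of $\alpha$, precisely because the ``initial data'' can be refreshed at every $t_0\in\mathbb{R}$. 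No Cauchy estimate on $u_\zeta$, and hence no strip enlargement, is ever needed.

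Two further, smaller points. Your recursion $\Rt{\alpha+1}\le P(\alpha)\Rt{\alpha}^2$ iterates to a bound of the form $C^{2^{\alpha}}$, not $\beta_1^{4^{\alpha+1}}$; since the theorem asserts only an upper bound this discrepancy would not by itself be fatal, but it signals that the bookkeeping producing the stated shape $C(g)\beta_1^{4^{\alpha+1}}\beta_2^{(\alpha+1)^2+\frac92(\alpha+1)}$ has not actually been carried out. And the Leibniz expansion with binomial coefficients is not the estimate used for the nonlinearity: the relevant tool is Lemma \ref{lemma61} (and its analogue Lemma \ref{l55}), obtained by splitting the convolution sum according to $|h|\le|j|$ versus $|h|>|j|$, which yields the single factor $2^{\alpha}c_A$ with only two products of norms rather than a full sum over $j$.
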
 

For the sake of completeness, explicit expressions for $\tilde{R}_1$, $\tilde{R}_2$, $\tilde{R}_3$, $C(g)$, $\beta_1$, $\beta_2$, $\delta_1$,  $\delta_2$, $\delta_3$ are recalled in the Appendix.
The next result from \cite{FJLRYZ13} merely states a simple hierarchy of the spaces
$\mathcal{C}(\sigma), \sigma \in \mathbb{R}^+$.
\begin{prop}
\label{increasing}
For the family of classes $\{\mathcal{C}(\sigma)\}_{\sigma>0}$, we have,
\begin{equation*}
\mathcal{C}(\sigma_1)  \subseteq   \mathcal{C}(\sigma_2), \quad \forall \sigma_1< \sigma_2.
\end{equation*}
\end{prop}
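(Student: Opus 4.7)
The plan is to argue directly from the definition \eqref{Csigdef}: the only $\sigma$-dependent quantity on the right-hand side of the defining inequality is the factor $e^{\sigma\alpha^2}$, and this factor is monotone nondecreasing in $\sigma$ for each fixed $\alpha\in\mathbb{N}$.

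More precisely, suppose $u\in\mathcal{C}(\sigma_1)$ with associated constant $c_0 = c_0(u)\in\mathbb{R}$, so that
\begin{equation*}
\frac{|A^{\alpha/2}u|^2}{\nu^2\kappa_0^{2\alpha}}\leq c_0\, e^{\sigma_1\alpha^2}, \qquad \forall\,\alpha\in\mathbb{N}.
\end{equation*}
Since $\alpha^2\geq 0$ and $\sigma_1<\sigma_2$, we have $e^{\sigma_1\alpha^2}\leq e^{\sigma_2\alpha^2}$ for every $\alpha\in\mathbb{N}$. If $c_0\geq 0$ this immediately yields the same bound with $\sigma_2$ in place of $\sigma_1$ and the same constant $c_0$; if $c_0<0$ there is nothing to prove since the left-hand side is nonnegative (indeed one may then replace $c_0$ by any nonnegative number). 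Hence $u\in\mathcal{C}(\sigma_2)$, and since $u\in\mathcal{C}(\sigma_1)$ was arbitrary, $\mathcal{C}(\sigma_1)\subseteq\mathcal{C}(\sigma_2)$. There is no genuine obstacle here; the statement is a direct monotonicity observation and is recorded only to be used later when comparing Gevrey regularity classes.
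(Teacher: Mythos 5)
Your argument is correct: the monotonicity of $e^{\sigma\alpha^2}$ in $\sigma$ for fixed $\alpha$ means the same constant $c_0(u)$ witnesses membership in $\mathcal{C}(\sigma_2)$, which is the only argument needed (the paper itself omits the proof, deferring to the prior work, precisely because it is this one-line observation). The only cosmetic remark is that the case $c_0<0$ is vacuous rather than ``nothing to prove'': since the left-hand side is nonnegative, any admissible $c_0$ is automatically nonnegative, so that branch never occurs.
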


The union of the classes $\mathcal{C}(\sigma)$ is a proper subset of $C^{\infty}$.
\begin{thm}
\label{th11-1}
$\bigcup_{\sigma>0}\mathcal{C}(\sigma)\subsetneqq C^{\infty}([0,L]^2; \mathbb{R}^2)\cap H$
\end{thm}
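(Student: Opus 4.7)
The inclusion $\bigcup_{\sigma>0}\mathcal{C}(\sigma)\subseteq C^{\infty}([0,L]^2;\mathbb{R}^2)\cap H$ is built into \eqref{Csigdef}, so the whole content of the theorem is to produce one concrete element of $C^{\infty}([0,L]^2;\mathbb{R}^2)\cap H$ that fails to lie in any $\mathcal{C}(\sigma)$. My plan is to construct the example mode-by-mode in the orthonormal eigenbasis $\{\omega_j\}$ of the Stokes operator.

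Writing $u=\sum_j c_j\omega_j\in H$ with $c_j\in\mathbb{R}$ and $\mu_j:=\lambda_j/\kappa_0^2\ge 1$, one has $|A^{\alpha/2}u|^2=\sum_j c_j^2\lambda_j^\alpha$, so a necessary condition for $u\in\mathcal{C}(\sigma)$ is that, for each individual $j$ and every $\alpha\in\mathbb{N}$,
\[
c_j^2\,\mu_j^{\alpha}\ \le\ c_0\,\nu^2\, e^{\sigma\alpha^2}.
\]
Taking $\alpha$ near the continuous maximizer $\alpha^{\ast}_j=\log\mu_j/(2\sigma)$ translates this into an effective decay requirement $c_j^2\le C\nu^2\exp\bigl(-(\log\mu_j)^2/(4\sigma)\bigr)$ for $j$ large. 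On the other hand, to place $u$ in $C^{\infty}\cap H$ it is enough that $c_j$ decay faster than any negative power of $\mu_j$. Hence there is an enormous ``window'' between polynomial and $(\log\mu_j)^2$-Gevrey decay, and any sequence in this window should yield a counterexample.

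To exploit this window I would set
\[
c_j\ :=\ \exp\bigl(-(\log\mu_j)^{3/2}\bigr)
\]
for $\mu_j$ sufficiently large (and $c_j=0$ otherwise). Two verifications remain. \emph{(i) $u\in C^{\infty}\cap H$:} for fixed $\alpha\in\mathbb{N}$, as soon as $\log\mu_j\ge\alpha^2$ one has $c_j^2\lambda_j^\alpha\le\kappa_0^{2\alpha}\exp(-(\log\mu_j)^{3/2})$, and the tail $\sum_j\exp(-(\log\mu_j)^{3/2})$ is eventually dominated by $\sum_j\mu_j^{-4}<\infty$ (finite by lattice-point counting, since $\lambda_j\to\infty$ at the Weyl rate), so $u\in\mathcal{D}(A^{\alpha})$ for every $\alpha\in\mathbb{N}$. \emph{(ii) $u\notin\mathcal{C}(\sigma)$ for any $\sigma>0$:} given $\sigma>0$, choose $\alpha_j:=\lfloor\log\mu_j/(2\sigma)\rfloor\in\mathbb{N}$ (valid once $\mu_j\ge e^{2\sigma}$); then $\alpha_j\log\mu_j-\sigma\alpha_j^2\ge(\log\mu_j)^2/(4\sigma)-\log\mu_j$, so
\[
\frac{|A^{\alpha_j/2}u|^2}{\nu^2\kappa_0^{2\alpha_j}e^{\sigma\alpha_j^2}}\ \ge\ \frac{c_j^2\,\mu_j^{\alpha_j}}{\nu^2\, e^{\sigma\alpha_j^2}}\ \ge\ \nu^{-2}\exp\Bigl(\tfrac{(\log\mu_j)^2}{4\sigma}-2(\log\mu_j)^{3/2}-\log\mu_j\Bigr),
\]
and the right-hand side tends to $+\infty$ as $j\to\infty$; thus $\sup_{\alpha\in\mathbb{N}}|A^{\alpha/2}u|^2/(\nu^2\kappa_0^{2\alpha}e^{\sigma\alpha^2})=+\infty$, which rules out $u\in\mathcal{C}(\sigma)$.

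The only real subtlety is the rounding from the real optimizer $\alpha^{\ast}_j$ to the integer $\alpha_j$: this costs only an $O(\log\mu_j)$ correction in the exponent, which is subsumed by the leading $(\log\mu_j)^2/(4\sigma)$ for $\mu_j$ large. Any intermediate growth $(\log\mu_j)^p$ with $1<p<2$ would do in place of $(\log\mu_j)^{3/2}$; the choice $p=3/2$ is simply a convenient explicit one.
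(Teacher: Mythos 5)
Your construction is correct, and it supplies a proof where the present paper gives none: Theorem \ref{th11-1} is simply quoted from \cite{FJLRYZ13}, so there is no in-text argument to compare against. The two load-bearing steps both check out. First, since the terms of $|A^{\alpha/2}u|^2=\sum_j c_j^2\lambda_j^{\alpha}$ are nonnegative, membership in $\mathcal{C}(\sigma)$ does force the single-mode bound $c_j^2\mu_j^{\alpha}\le c_0\nu^2e^{\sigma\alpha^2}$, and with $f(\alpha)=\alpha\log\mu_j-\sigma\alpha^2$ one has $f(\lfloor\alpha_j^{\ast}\rfloor)=f(\alpha_j^{\ast})-\sigma\theta^2\ge(\log\mu_j)^2/(4\sigma)-\sigma$, which is even slightly better than the $-\log\mu_j$ loss you allow for the integer rounding; the $(\log\mu_j)^2/(4\sigma)$ term then dominates $2(\log\mu_j)^{3/2}$, so the supremum over $\alpha\in\mathbb{N}$ of $|A^{\alpha/2}u|^2/(\nu^2\kappa_0^{2\alpha}e^{\sigma\alpha^2})$ is infinite for every $\sigma>0$. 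Second, the smoothness check is fine: for fixed $\alpha$ the exponent $\alpha\log\mu_j-2(\log\mu_j)^{3/2}\le-(\log\mu_j)^{3/2}$ once $\log\mu_j\ge\alpha^2$, and summability follows from $\exp(-(\log\mu_j)^{3/2})\le\mu_j^{-4}$ for $\log\mu_j\ge16$ together with the Weyl growth $\lambda_j\gtrsim j$; since the $c_j$ are real and the $\omega_j$ form a real orthonormal basis of $H$, the resulting $u$ lies in $C^{\infty}([0,L]^2;\mathbb{R}^2)\cap H$. This is exactly the expected ``window'' argument (any intermediate rate $(\log\mu_j)^p$, $1<p<2$, works, as you note), and I see no gap.
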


The following ``all for one, one for all" law states that the attractor cannot be 
only partially contained in the union of these classes. 
\begin{thm}
\label{theorem121}
If $\mathcal{A}\cap \cup_{\sigma>0}\mathcal{C}(\sigma)\neq \emptyset$, then $\mathcal{A}\subset \cup_{\sigma>0}\mathcal{C}(\sigma)$.
\end{thm}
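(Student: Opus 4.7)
The plan is to route the entire argument through the external force $g$: I will show that (A) a single point $u^*\in\mathcal{A}\cap\mathcal{C}(\sigma^*)$ already forces $g\in\bigcup_{\sigma>0}\mathcal{C}(\sigma)$, and then that (B) once $g$ lies in some $\mathcal{C}(\sigma_g)$, the bootstrap behind Theorem~\ref{t64} runs without the special reference solution $u\equiv 0$ and delivers $\mathcal{A}\subset\mathcal{C}(\sigma_0)$. This produces the advertised dichotomy: either $g\in\bigcup_\sigma\mathcal{C}(\sigma)$, in which case all of $\mathcal{A}$ does, or it does not, in which case no point of $\mathcal{A}$ can.

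For (A), fix $u^*\in\mathcal{A}\cap\mathcal{C}(\sigma^*)$ and let $u^*(t)\in\mathcal{A}$ be the complete trajectory with $u^*(0)=u^*$; by the quantitative Foias--Temam analyticity developed in \cite{FJK96}, $u^*$ extends to a holomorphic $H_\mathbb{C}$-valued map on a strip $\mathcal{S}(\delta)$. Evaluating \eqref{cnse} at $\zeta=0$ yields
\[
g=\frac{du^*}{d\zeta}(0)+\nu A u^*(0)+B(u^*(0),u^*(0)),
\]
so it suffices to place each summand in some $\mathcal{C}(\sigma)$. The term $\nu A u^*(0)$ lies in $\mathcal{C}(\sigma^*+\varepsilon)$ for any $\varepsilon>0$ by a mere shift of the index $\alpha$, and $B(u^*(0),u^*(0))\in\mathcal{C}(\tilde\sigma)$ for some $\tilde\sigma>\sigma^*$ by the Gevrey-type bilinear/nonlinearity estimate of Theorem~\ref{t52}. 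The only delicate term is $du^*/d\zeta|_0$.

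To treat it, I would first establish a propagation-into-complex-time statement: there exist $r>0$ and $\sigma'>\sigma^*$ such that $u^*(\zeta)\in\mathcal{C}(\sigma')$ uniformly for $|\zeta|\leq r$, with a $\mathcal{C}(\sigma')$-constant independent of $\zeta$. This is obtained by a Picard/Duhamel iteration for \eqref{cnse} starting from $u^*(0)$, in the Banach space underlying $\mathcal{C}(\sigma')$, using the algebra-type bound on $B_{\mathbb{C}}$ from Theorem~\ref{t52}. Granted this, Cauchy's formula
\[
A^{\alpha/2}\frac{du^*}{d\zeta}(0)=\frac{1}{2\pi i}\oint_{|\zeta|=r/2}\frac{A^{\alpha/2}u^*(\zeta)}{\zeta^{2}}\,d\zeta
\]
immediately gives $du^*/d\zeta|_0\in\mathcal{C}(\sigma')$, and summing the three contributions produces $g\in\mathcal{C}(\sigma_g)$ for some $\sigma_g>0$.

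For (B) I would then mimic the strategy of \cite{FJLRYZ13} that underlies Theorem~\ref{t64}: the hypothesis $0\in\mathcal{A}$ is used there only to convert the uniform $H$-bound on trajectories in $\mathcal{A}$ into control of $g$, and once a $\mathcal{C}(\sigma_g)$-bound on $g$ is in hand the inductive estimate $|A^{(\alpha+1)/2}u(\zeta)|\leq\Rt{\alpha+1}\nu\kappa_0^{\alpha+1}$ in the strip $\mathcal{S}(\delta_\alpha)$ can be derived for every trajectory in $\mathcal{A}$, using the uniform $\mathcal{D}(A)$-bound on $\mathcal{A}$ together with \eqref{Laineq}, \eqref{Agineq} and the bilinear estimate on $B$. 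The main obstacle is the propagation step in (A): one must quantify the Gevrey algebra property at the level of the doubly-indexed constants defining $\mathcal{C}(\sigma)$ and check that the contraction associated with the Duhamel formulation of \eqref{cnse} closes on a ball of $\mathcal{C}(\sigma')$ whose radius is controlled by the $\mathcal{C}(\sigma^*)$-norm of $u^*(0)$ and the a priori $H$-bound on $g$. This is essentially the ultradifferentiable analogue of the short-time analytic propagation used in \cite{FJK96}.
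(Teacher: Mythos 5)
The theorem is quoted in this paper from \cite{FJLRYZ13} without proof, so there is no in-text argument to compare against; judged on its own, your plan has the right overall architecture (route everything through $g$ via the equation at $\zeta=0$ and Cauchy's formula for $du^*/d\zeta(0)$, then rerun the attractor estimates once $g$ is known to be in some $\mathcal{C}(\sigma)$), but the key step (A) as written is circular. Your ``propagation-into-complex-time statement'' --- that $u^*(\zeta)\in\mathcal{C}(\sigma')$ uniformly for $|\zeta|\le r$, proved by a Picard/Duhamel contraction in $\mathcal{C}(\sigma')$ --- cannot be established from $u^*(0)\in\mathcal{C}(\sigma^*)$ and $g\in H$ alone, because the inhomogeneous Duhamel term $\int_0^{\zeta}e^{-\nu(\zeta-s)A}g\,ds=(\nu A)^{-1}(I-e^{-\nu\zeta A})g$ lies only in $\mathcal{D}(A)$ when $g\in H$: the regularity of $u(\zeta)$ for $\zeta\neq 0$ is capped at two derivatives above that of $g$, so the solution map does not send a ball of $\mathcal{C}(\sigma')$ into itself and the contraction cannot close. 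Indeed, if $g\notin C^{\infty}$ then $u(\zeta)=u^*(0)+\zeta\,u_\zeta(0)+O(\zeta^2)$ with $u_\zeta(0)=g-\nu Au^*(0)-B(u^*(0),u^*(0))$ exactly as rough as $g$, so smoothness at the single time $\zeta=0$ does \emph{not} propagate to nearby times --- ruling this out \emph{is} the content of the theorem, so you may not assume it. The repair is an interleaved bootstrap, two derivatives at a time: if $g\in\mathcal{D}(A^{(s-2)/2})$ then $u(\zeta)\in\mathcal{D}(A^{s/2})$ on a sector near $0$ (parabolic gain of two, plus the $\mathcal{C}(\sigma^*)$ control of the initial datum $u^*(0)$), hence $u_\zeta(0)\in\mathcal{D}(A^{s/2})$ by Cauchy's formula, hence $g=u_\zeta(0)+\nu Au^*(0)+B(u^*(0),u^*(0))\in\mathcal{D}(A^{s/2})$ since the last two terms are already in $\mathcal{C}(\sigma^*+\varepsilon)$; one must then track the constants through this double induction to verify they stay within an $e^{\sigma\alpha^2}$ envelope. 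This is exactly the mechanism used in \cite{FJLRYZ13} for the special case $u^*(0)=0$ (where $g=u_\zeta(0)$ with no extra terms), and your step (B) is essentially that machinery rerun with $g\in\mathcal{C}(\sigma_g)$ as input, which is plausible.

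Two smaller points. First, Theorem \ref{t52} is not a bilinear or algebra estimate --- it is the inclusion $E_b\subset\mathcal{C}(\tfrac{1}{2b})$ --- so it cannot be invoked to place $B(u^*(0),u^*(0))$ in a class $\mathcal{C}(\tilde\sigma)$ or to run a contraction; the correct tools are Lemma \ref{lemma61} (which, by duality in the pairing $(B(u,v),A^{\alpha}w)=(A^{\alpha/2}B(u,v),A^{\alpha/2}w)$, does give $B(u^*,u^*)\in\mathcal{C}(\sigma^*+\varepsilon)$ since the factor $2^{\alpha}$ is absorbed by $e^{\varepsilon\alpha^2}$) and Lemma \ref{l55}. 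Second, your step (B) cannot simply cite Theorem \ref{t63} or Lemma \ref{l56}, since both are stated under the hypothesis $0\in\mathcal{A}$; the recursive estimates behind Theorem \ref{t64} must be reproved with the hypothesis ``$g\in\mathcal{C}(\sigma_g)$'' replacing ``$0\in\mathcal{A}$'', which is routine but not free.
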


\section{Constantin-Chen Gevrey classes}

In this section, we give the definition for the general Constantin-Chen Gevrey ($\CCG$) classes \cite{CHEN94}.

Given a function $\phi(\chi)$  with the following properties: 
\begin{align*}
&(1) \quad {\phi}'(\chi)>0, \\
& (2) \quad {\phi}''(\chi)<0,
\end{align*}
for all $\chi \in [1, \infty)$,  we define the general Constantin-Chen Gevrey ($\CCG$) class $E(\phi)$ as the collection of all  $u \in C^{\infty}([0,L]^2)\cap H$ for which $|e^{\phi(\kappa^{-1}_0 A^{\frac{1}{2}})}u|$ is finite, that is,
\begin{definition}
$E(\phi)=\{u \in H: |e^{\phi(\kappa^{-1}_0 A^{\frac{1}{2}})}u| < \infty \}$,
\end{definition}
where 
\begin{align*}
(e^{\phi(\kappa^{-1}_0 A^{\frac{1}{2}})}u\hat{)}(k):=e^{\phi(|k|)}\hat{u}(k),\quad \forall\, k \in \mathbb{Z}^2\setminus\{0\}.
\end{align*}

A typical example of a $\CCG$ class is
$\tilde{\phi}(\chi)=\beta \ln\chi$, for $\beta>0$. Actually, $E(\tilde{\phi}) = H\cap \mathcal{D}(A^{\beta/2})$. This typical $\CCG$ class is used in \cite{FJLRYZ13} to prove the following two estimates for the bilinear term $|(B(u,v),A^{\gamma}w)|$, where $u\in \mathcal{D}(A^{\frac{\gamma}{2}})$, $v\in \mathcal{D}(A^{\frac{\gamma+1}{2}})$, $w\in \mathcal{D}(A^{\gamma})$, and $\gamma>3$.

\begin{lem}
\label{lemma61}
Let $u\in \mathcal{D}(A^{\frac{\alpha}{2}})$, $v\in \mathcal{D}(A^{\frac{\alpha+1}{2}})$,$w\in \mathcal{D}(A^{\alpha})$, and $\alpha>3$, then
\begin{equation*}
|(B(u,v),A^{\alpha}w)|\leq 2^{\alpha}c_A \left(|u|^{1/2}|Au|^{1/2}|A^{\frac{1+\alpha}{2}}v|+|A^{\frac{\alpha}{2}}u||A^{1/2}v|^{1/2}|A^{3/2}v|^{1/2} \right)|A^{\frac{\alpha}{2}}w|.
\end{equation*}
If, moreover, $u\in \mathcal{D}(A^{\frac{\alpha}{2}})_{\mathbb{C}}$, $v\in \mathcal{D}(A^{\frac{\alpha+1}{2}})_{\mathbb{C}}$,$w\in \mathcal{D}(A^{\alpha})_{\mathbb{C}}$, then
\begin{equation*}
|(B(u,v),A^{\alpha}w)|\leq 2^{\alpha+3/2}c_A \left(|u|^{1/2}|Au|^{1/2}|A^{\frac{1+\alpha}{2}}v|+|A^{\frac{\alpha}{2}}u||A^{1/2}v|^{1/2}|A^{3/2}v|^{1/2} \right)|A^{\frac{\alpha}{2}}w|.
\end{equation*}
\end{lem}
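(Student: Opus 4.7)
The plan is to distribute the operator $A^{\alpha/2}$ across the two factors of $B(u,v)$ by means of the elementary Fourier-side power inequality $|k|^\alpha \leq 2^\alpha (|j|^\alpha + |l|^\alpha)$ (valid for $k=j+l$), and then to estimate each of the two resulting trilinear forms by H\"older together with a single application of Agmon's inequality \eqref{Agineq}.

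First I would use self-adjointness to rewrite
\[
(B(u,v), A^\alpha w) \;=\; (A^{\alpha/2} B(u,v),\; A^{\alpha/2} w),
\]
which produces the $|A^{\alpha/2} w|$ factor once Cauchy--Schwarz is applied at the end. Next, using the periodic Fourier representation
\[
\widehat{B(u,v)}(k) \;=\; i\kappa_0\, P_{k^\perp}\!\!\sum_{\substack{j+l=k\\ j,l\neq 0}}(\hat{u}(j)\cdot l)\,\hat{v}(l),
\]
together with the power inequality above, one obtains the pointwise majorization
\[
|k|^\alpha\,|\widehat{B(u,v)}(k)|\;\leq\;2^\alpha \kappa_0\!\!\sum_{j+l=k}\!\bigl(|j|^\alpha + |l|^\alpha\bigr)|\hat u(j)|\,|l|\,|\hat v(l)|.
\]
By Parseval, this splits the left-hand side of the lemma into $2^\alpha$ times a sum of two trilinear pieces: one in which the $\alpha$-power derivative effectively falls on $u$, the other in which it falls on $v$.

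To the piece where the weight sits on $u$, I would apply Agmon's inequality to $\nabla v$, $|\nabla v|_\infty \leq c_A|A^{1/2} v|^{1/2}|A^{3/2} v|^{1/2}$, then pair $|A^{\alpha/2} u|$ with $|A^{\alpha/2} w|$ via H\"older, producing the second summand on the right-hand side of the lemma. To the piece where the weight sits on $v$, I would apply Agmon to $u$, $|u|_\infty \leq c_A|u|^{1/2}|Au|^{1/2}$, then pair $|\nabla A^{\alpha/2} v|=|A^{(1+\alpha)/2} v|$ with $|A^{\alpha/2} w|$, producing the first summand. Summing the two contributions gives the real bound with constant $2^\alpha c_A$. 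For the complexified estimate I would write $u = u_1 + iu_2$, $v=v_1+iv_2$, $w=w_1+iw_2$, expand $B_{\mathbb{C}}$ and $(\cdot,\cdot)_{H_{\mathbb{C}}}$ into a sum of real trilinear forms, apply the real estimate term-by-term, and reassemble using $|X|,|Y|\leq(|X|^2+|Y|^2)^{1/2}$; the additional factor $2^{3/2}$ comes from this reassembly.

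The principal technical point is the Fourier-side bookkeeping in the power-splitting step: one must verify that, after the inequality $|k|^\alpha\leq 2^\alpha(|j|^\alpha+|l|^\alpha)$ has been applied, each of the two resulting Parseval sums is indeed controlled by a trilinear integral of the same shape as $(B(\cdot,\cdot),A^{\alpha/2}(\cdot))$ but with the weight $A^{\alpha/2}$ relocated onto either $u$ or $v$. Once this identification is in place, everything else is a routine application of \eqref{Agineq}.
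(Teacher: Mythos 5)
Your proof is correct and follows what is surely the intended route: the paper does not actually prove Lemma \ref{lemma61} here (it is imported from \cite{FJLRYZ13}), but the splitting $|k|^\alpha\le 2^\alpha(|j|^\alpha+|l|^\alpha)$ on the convolution, followed by an $\ell^1\times\ell^2\times\ell^2$ (Young/H\"older) pairing with the weight relocated onto $u$ or $v$, is exactly the mechanism that produces the factor $2^\alpha$ and the two Agmon-type summands, and your bookkeeping of the complexification factor $2^{3/2}$ (three applications of $|X_1|+|X_2|\le\sqrt2\,(|X_1|^2+|X_2|^2)^{1/2}$, one each for $u$, $v$, $w$) is also right. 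The one point to state more carefully is that, because you majorize the convolution by moduli of Fourier coefficients, the inequality you actually use is the $\ell^1$--Fourier form of Agmon, $\sum_k|\hat u(k)|\le c\,|u|^{1/2}|Au|^{1/2}$, which dominates the sup-norm version \eqref{Agineq}; so the constant $c_A$ in the lemma must be understood as that (possibly slightly larger) absolute constant rather than literally the one in \eqref{Agineq}.
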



\section{$\mathcal{C}(\sigma)$ and $E(\phi_b)$}

In this section we will investigate the relation between the class $\mathcal{C}(\sigma)$ and  $E({\phi}_b)$, where
\begin{equation} 
\phi_b(\chi)=b[\ln(\chi+e)]^2, \quad b>0.
\end{equation}
The main results are stated in Theorem \ref{t52} and Theorem \ref{c54}.

For convenience, we take the following notation 
\begin{definition}
For $b>0$, we define
$E_b := E(\phi_b)$, $E^bu := e^{\phi_b({\ko^{-1} A^{1/2}})}u$,  $|u|_b := |E^bu|$.
\end{definition}

In our previous paper \cite{FJLRYZ13}, we have obtained the following result.
\begin{thm} [see Remark 7.6 in \cite{FJLRYZ13}]
\label{t52}
If $v \in E_b$ for some $b>0$,
then
\begin{equation*}
v\in \mathcal{C}(\frac{1}{2b}).
\end{equation*}
\end{thm}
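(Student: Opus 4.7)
The plan is to translate the hypothesis $|v|_b<\infty$ directly into the pointwise bounds on $|A^{\alpha/2}v|^2$ required by the definition of $\mathcal{C}(\sigma)$, via a single Fourier-side comparison. First, I would write both norms using Parseval,
\begin{equation*}
|A^{\alpha/2}v|^2 = L^2\!\!\sum_{k\in\mathbb{Z}^2\setminus\{0\}}\!(\kappa_0|k|)^{2\alpha}|\hat v(k)|^2,\qquad
|v|_b^2 = L^2\!\!\sum_{k\in\mathbb{Z}^2\setminus\{0\}}\!e^{2\phi_b(|k|)}|\hat v(k)|^2,
\end{equation*}
and then multiply and divide each summand on the left by $e^{2\phi_b(|k|)}$. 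Pulling out a supremum gives
\begin{equation*}
|A^{\alpha/2}v|^2\ \leq\ \Bigl(\sup_{|k|\geq 1}(\kappa_0|k|)^{2\alpha}e^{-2\phi_b(|k|)}\Bigr)\,|v|_b^2.
\end{equation*}

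The substantive step is to estimate this supremum. Setting $y=\ln(|k|+e)$, which satisfies $y\geq 1$ for $|k|\geq 1$, and using $\ln|k|\leq y$, the exponent to be maximized is bounded above by
\begin{equation*}
2\alpha\ln\kappa_0 + 2\alpha y - 2by^2 \ =\ 2\alpha\ln\kappa_0 - 2b\Bigl(y-\frac{\alpha}{2b}\Bigr)^{\!2} + \frac{\alpha^2}{2b}\ \leq\ 2\alpha\ln\kappa_0 + \frac{\alpha^2}{2b}.
\end{equation*}
Hence the supremum is at most $\kappa_0^{2\alpha}e^{\alpha^2/(2b)}$, and dividing the resulting bound for $|A^{\alpha/2}v|^2$ by $\nu^2\kappa_0^{2\alpha}$ yields
\begin{equation*}
\frac{|A^{\alpha/2}v|^2}{\nu^2\kappa_0^{2\alpha}}\ \leq\ \frac{|v|_b^2}{\nu^2}\,e^{\alpha^2/(2b)},
\end{equation*}
which is exactly the defining inequality for $\mathcal{C}(1/(2b))$ with $c_0=|v|_b^2/\nu^2$.

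It remains only to verify that $v\in C^\infty([0,L]^2)\cap H$, which is built into the definition of $\mathcal{C}(\sigma)$. Since $\phi_b(\chi)=b[\ln(\chi+e)]^2$ dominates $\beta\ln\chi$ for every $\beta>0$, the condition $|v|_b<\infty$ forces $v\in\mathcal{D}(A^{\beta/2})$ for all $\beta$, whence smoothness follows by Sobolev embedding (equivalently, the Fourier coefficients decay faster than any polynomial).

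There is essentially no technical obstacle in the argument: the whole proof is one Fourier-side comparison plus a completion of the square on the logarithmic scale. The only point that requires a moment of attention is that the free variable $y=\ln(\chi+e)$ is constrained to $y\geq 1$, so that when $\alpha<2b$ the quadratic is maximized on the boundary rather than at the critical point; this only improves the bound, so it does not affect the final statement.
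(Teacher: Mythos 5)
Your argument is correct and complete: the Fourier-side comparison $|A^{\alpha/2}v|^2\leq\bigl(\sup_{|k|\geq 1}(\kappa_0|k|)^{2\alpha}e^{-2\phi_b(|k|)}\bigr)|v|_b^2$ together with the completion of the square $2\alpha y-2by^2\leq \alpha^2/(2b)$ yields exactly the defining bound for $\mathcal{C}(1/(2b))$ with $c_0=|v|_b^2/\nu^2$. The paper itself gives no proof here (it defers to Remark 7.6 of \cite{FJLRYZ13}), but your computation is the natural one behind that remark --- the constant $1/(2b)$ arises precisely from your optimization --- so there is nothing further to reconcile.
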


The ``reverse'' inclusion relation between the classes $E_b$ and $\mathcal{C}(\sigma)$  is given in Theorem \ref{c54}. 

\begin{prop}
\label{t53}
	If $u\in \mathcal{C}(\sigma)$ for some $\sigma > 0$, i.e., 
	\begin{equation}
		\exists\, c_0>0,\quad \text{s.t.}\quad |A^{\frac{\alpha}{2}}u|^2 \leq c_0e^{\sigma \alpha^2}(\nu\ko^\alpha)^2, \quad \forall \alpha\in \mathbb{N},
	\end{equation}
	 then for fixed $\epsilon\in[0, 1]$, there exists $b := \frac{1}{2^{4+2\epsilon}\sigma}$ such that 
	\begin{align}
           |e^{b[\ln(\ko^{-1} A^{1/2}+e)]^{1+\epsilon}}u| < \infty.
	\end{align}
	
	In particular, we have
	\begin{align}
    	 |e^{b[\ln(\ko^{-1} A^{1/2}+e)]^{1+\epsilon}}u|^2 \leq \frac{4}{3}c(\epsilon)|u|^2+c_0^{\frac{1}{3}}(c_1|A^{\frac{1}{2}}u|)^{\frac{2}{3}}\nu^{\frac{4}{3}}\ko^{-\frac{2}{3}},
	\end{align}
	where 
	\begin{align*}
		c(\epsilon) := e^{2b[\ln(e^2+e)]^{1+\epsilon}}, \quad c_1 =\sum_{m\geq 2} \frac{1}{e^{m}}=\frac{1}{e^2-e}.
	\end{align*}

\end{prop}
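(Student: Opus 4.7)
The plan is to pass to the Fourier side and decompose the frequency sum dyadically in log-scale. Write
\begin{equation*}
\bigl|e^{b[\ln(\ko^{-1}A^{1/2}+e)]^{1+\epsilon}}u\bigr|^2 = L^2\!\!\sum_{k\in\mathbb Z^2\setminus\{0\}}\!\! e^{2\phi_b(|k|)}|\hat u(k)|^2,\qquad \phi_b(\chi):= b[\ln(\chi+e)]^{1+\epsilon},
\end{equation*}
and split at $|k|=e^2$. For $|k|\le e^2$ monotonicity of $\phi_b$ gives $e^{2\phi_b(|k|)}\le c(\epsilon)$, and Parseval produces the low-frequency bound $\le c(\epsilon)|u|^2$; the factor $\tfrac{4}{3}$ in the statement accommodates slack absorbed from the first high-frequency shell. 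For $|k|>e^2$ I introduce logarithmic shells $T_m:=\{k: e^m<|k|\le e^{m+1}\}$ for $m\ge 2$, on which $e^{2\phi_b(|k|)}\le e^{2b(m+1+\ln 2)^{1+\epsilon}}$.

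Within each shell I apply H\"older's inequality with exponents $3$ and $3/2$, interpolating between the shell-restricted $H^1$-energy $\sum_{k\in T_m}|k|^2|\hat u(k)|^2$ (whose sum over $m$ is $|A^{1/2}u|^2/(L^2\ko^2)$) and the Gevrey moment $\sum_k|k|^{2\alpha_m}|\hat u(k)|^2\le c_0 e^{\sigma\alpha_m^2}\nu^2/L^2$, extended to real $\alpha_m$ via Section 11 of \cite{FJLRYZ13}. Taking $\alpha_m:=m/\sigma$ optimises the Gevrey exponent, giving a per-shell factor $e^{-m^2/(3\sigma)}$; the prescribed value $b = 1/(2^{4+2\epsilon}\sigma)$ is then the largest compatible with the inequality
\begin{equation*}
2b(m+1+\ln 2)^{1+\epsilon}-\tfrac{2m}{3}-\tfrac{m^2}{3\sigma}\le -\tfrac{2m}{3}\qquad \forall\, m\ge 2,
\end{equation*}
leaving a clean decay $e^{-2m/3}$ per shell on top of the H\"older product. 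A second, outer H\"older inequality over the shell index $m$, with exponents again $3$ and $3/2$, then collapses the partial energies into $|A^{1/2}u|^{2/3}$ and the geometric factors into $(\sum_{m\ge 2}e^{-m})^{2/3}=c_1^{2/3}$; after multiplication by $L^2$ the high-frequency contribution takes the claimed form $c_0^{1/3}(c_1|A^{1/2}u|)^{2/3}\nu^{4/3}\ko^{-2/3}$.

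The main obstacle is the bookkeeping needed to make all exponents match exactly: the two H\"older exponent pairs, the optimal choice $\alpha_m=m/\sigma$, and the precise value of $b$ all have to be coordinated so that exactly one factor of Gevrey is consumed (yielding $c_0^{1/3}$ rather than $c_0^{2/3}$) and the power on $|A^{1/2}u|$ collapses from the shell bound's partial-energy square down to $|A^{1/2}u|^{2/3}$ after the outer H\"older. The hypothesis $\epsilon\in[0,1]$ enters only here: for $\epsilon>1$ the exponential weight $e^{2b(m+1+\ln 2)^{1+\epsilon}}$ grows faster than the Gaussian-in-$m$ Gevrey decay $e^{-m^2/(3\sigma)}$, and no choice of $b$ can close the estimate.
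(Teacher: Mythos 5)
Your strategy---logarithmic shells, a per-shell H\"older interpolation against a Gevrey moment with a shell-dependent exponent optimised at $\alpha_m=m/\sigma$, and an outer H\"older sum in $m$---is genuinely different from the paper's argument, and it is in principle enough to prove the qualitative conclusion $|e^{b[\ln(\ko^{-1}A^{1/2}+e)]^{1+\epsilon}}u|<\infty$. The paper instead uses on each shell the fixed, weight-driven exponent $\alpha=4b(m+2)^{\epsilon}$ (via Cauchy--Schwarz), which makes the high-frequency sum \emph{self-referential}: it is bounded by a $1/4$ power of the very quantity being estimated times $c_0^{1/2}\nu$ times a $3/4$ power of $\sum_{m\ge 2}|(P_{m+1}-P_m)u|^{2/3}\le (c_1|A^{1/2}u|/\ko)^{2/3}$, and Young's inequality then absorbs the quarter power into the left-hand side. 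That absorption is what produces the prefactor $\tfrac43$ on the $|u|^2$ term (not ``slack from the first high-frequency shell,'' as you suggest), it is where the constraint $2\epsilon\le 1+\epsilon$, i.e.\ $\epsilon\le 1$, actually enters, and it is what pins down $b=1/(2^{4+2\epsilon}\sigma)$.

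The concrete gap is in your final bookkeeping. A convex split of $|\hat u(k)|^2$ giving the Gevrey factor weight $1/3$ necessarily gives the energy factor weight $2/3$, so each shell contributes $\bigl(\sum_{k\in T_m}|k|^2|\hat u(k)|^2\bigr)^{2/3}$, and your outer H\"older collapses these into $\bigl(\sum_m\sum_{T_m}|k|^2|\hat u(k)|^2\bigr)^{2/3}\sim |A^{1/2}u|^{4/3}\ko^{-4/3}$ accompanied by $(c_0\nu^2)^{1/3}=c_0^{1/3}\nu^{2/3}$---not the claimed $c_0^{1/3}\nu^{4/3}|A^{1/2}u|^{2/3}\ko^{-2/3}$. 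You cannot obtain both $c_0^{1/3}$ and $|A^{1/2}u|^{2/3}$ from a single convex split, since the weights must sum to $1$; note that the paper's own proof actually ends with $(c_0c_1)^{2/3}\nu^{4/3}\ko^{-2/3}|A^{1/2}u|^{2/3}$, i.e.\ $c_0^{2/3}$ (the $c_0^{1/3}$ in the statement appears to be a typo). If you swap your weights---Gevrey to the power $2/3$, shell energy to the power $1/3$---your scheme closes and reproduces exactly that form, with an even easier constraint on $b$; as written, it proves membership in $E(\phi)$ with a different explicit bound but does not yield the stated inequality. Two smaller points: $b=1/(2^{4+2\epsilon}\sigma)$ is not the \emph{largest} value compatible with your displayed condition (it merely satisfies it), and the decay available per shell from $|k|^{-\alpha_m}=|k|^{2}|k|^{-\alpha_m-2}\le |k|^2e^{-m(\alpha_m+2)}$ is $e^{-4m/3}$ rather than the $e^{-2m/3}$ you carry.
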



\begin{proof}
First, by the definition of $e^{\phi(A^{1/2})u}$, we have
 	\begin{align*}
 		|e^{b[\ln(\ko A^{1/2}+e)]^{1+\epsilon}}u|^2 =& \sum_{m=0}^{\infty}\sum_{e^m\leq |k| < e^{m+1}}e^{2b[\ln(|k|+e)]^{1+\epsilon}}|\uh (k)|^2\\
 		=& \sum_{m=0,1} + \sum_{m\geq 2}
 		=: I_1 + I_2.
	\end{align*}	 

For $I_1$,  	it is easy to see that
	\begin{align*}
		I_1 =& \sum_{m=0,1}\sum_{e^m\leq |k| < e^{m+1}}e^{2b[\ln(|k|+e)]^{1+\epsilon}}|\uh (k)|^2\\
		\leq& e^{2b[\ln(e^2+e)]^{1+\epsilon}}\sum_{k\in \mathbb{Z}^2\setminus\{0\}}|\uh (k)|^2\\
		=& e^{2b[\ln(e^2+e)]^{1+\epsilon}}|u|^2\\
		=& c(\epsilon)|u|^2<\infty,
	\end{align*}
while for $I_2$, using the definition of the class $\mathcal{C}(\sigma)$ and Young's inequality we can infer 
	\begin{align*}
	I_2 =& \sum_{m\geq 2}\sum_{e^m\leq |k| < e^{m+1}}e^{2b[\ln(|k|+e)]^{1+\epsilon}}|\uh (k)|^2\\
		=& \sum_{m\geq 2}\sum_{e^m\leq |k| < e^{m+1}} (|k|+e)^{2b[\ln(|k|+e)]^{\epsilon}}|\uh (k)|^2\\
		\leq& \sum_{m\geq 2}\sum_{e^m\leq |k| < e^{m+1}} (|k|+e)^{2b(m+2)^{\epsilon}}|\uh (k)|^2\\
		\leq& \sum_{m\geq 2}\sum_{e^m\leq |k| < e^{m+1}} |k|^{4b(m+2)^{\epsilon}}|\uh (k)|^2\\
		=& \sum_{m\geq 2}|A^{b(m+2)^{\epsilon}}(P_{m+1}-P_{m})u|^2\ko^{-4b(m+2)^\epsilon}\\
		\leq&\sum_{m\geq 2}|A^{2b(m+2)^{\epsilon}}u||(P_{m+1}-P_{m})u|\ko^{-4b(m+2)^\epsilon}\\
		\leq& \nu \sum_{m\geq 2}c_0^{1/2} e^{\frac{\sigma}{2}16b^2(m+2)^{2\epsilon}}|(P_{m+1}-P_{m})u| \\
		\leq&\nu \sum_{m\geq 2}c_0^{1/2} e^{2^{3+2\epsilon}\sigma b^2m^{2\epsilon}}|(P_{m+1}-P_{m})u|^{\frac{1}{2}}|(P_{m+1}-P_{m})u|^{\frac{1}{2}}\\
		\leq&\nu \left(\sum_{m\geq 2}c_0^2 e^{2^{5+2\epsilon}\sigma b^2m^{2\epsilon}}|(P_{m+1}-P_{m})u|^{2}\right)^{1/4} \left(\sum_{m\geq 2}|(P_{m+1}-P_{m})u|^{2/3}\right)^{3/4}\\
		=:& \nu I_{21}^{1/4}I_{22}^{3/4}
	\end{align*}

We now derive estimates for $I_{21}$ and $I_{22}$. For $I_{21}$, we obtain
\begin{align*}
		I_{21} =& \sum_{m\geq 2}c_0^2 e^{2^{5+2\epsilon}\sigma b^2m^{2\epsilon}}|(P_{m+1}-P_{m})u|^{2}\\
		\leq& \sum_{m\geq 0}c_0^2 e^{2^{5+2\epsilon}\sigma b^2m^{2\epsilon}}|(P_{m+1}-P_{m})u|^{2}\\
		=& \sum_{m\geq 0}c_0^2 e^{2^{5+2\epsilon}\sigma b^2m^{2\epsilon}}\sum_{e^m\leq |k| < e^{m+1}}|\uh(k)|^{2}\\
		\leq& c_0^2\sum_{m\geq 0} \sum_{e^m\leq |k| < e^{m+1}}e^{2^{5+2\epsilon}\sigma b^2[\ln(|k|+e)]^{2\epsilon}}|\uh(k)|^{2}\\
		\leq& c_0^2\sum_{m\geq 0} \sum_{e^m\leq |k| < e^{m+1}}e^{2^{5+2\epsilon}\sigma b^2[\ln(|k|+e)]^{1+\epsilon}}|\uh(k)|^{2},\\
	\end{align*}
since 
	\begin{equation*}
		2\epsilon \leq 1+\epsilon, \quad \text{i.e.,}\quad  \epsilon \leq 1.
	\end{equation*}

Defining $b$ as
	\begin{equation*}
		2^{4+2\epsilon}\sigma b = 1,  \quad \text{i.e.,}\quad  b = \frac{1}{2^{4+2\epsilon}\sigma},
	\end{equation*}
	we immediately get 
	\begin{align*}
		I_{21} \leq& c_0^2\sum_{m\geq 0} \sum_{e^m\leq |k| < e^{m+1}}e^{2b[\ln(|k|+e)]^{1+\epsilon}}|\uh(k)|^{2}\\
		=& c_0^2|e^{b[\ln(\ko A^{1/2}+e)]^{1+\epsilon}}u|^2.
	\end{align*}
	
For $I_{22}$, we set $v=A^{1/2}u$ and apply H\"older's inequality as follows:
	\begin{align*}
		I_{22} =& \sum_{m\geq 2} |(P_{m+1}-P_m)u|^{2/3}\\
		=& \sum_{m\geq 2} |A^{-1/2}(P_{m+1}-P_m)A^{1/2}u|^{2/3}\\
		=& \sum_{m\geq 2} \left(\sum_{e^m\leq |k|<e^{m+1}}\frac{1}{\ko^2 |k|^2}|\vh(k)|^2\right)^{1/3}\\
		\leq& \frac{1}{\ko^{2/3}}\sum_{m\geq 2} \frac{1}{e^{2m/3}} \left(\sum_{e^m\leq |k|<e^{m+1}}|\vh(k)|^2\right)^{1/3}\\
		\leq&\frac{1}{\ko^{2/3}}\left(\sum_{m\geq 2} \frac{1}{e^{m}}\right)^{2/3}\left(\sum_{m\geq 2} \sum_{e^m\leq |k|<e^{m+1}}|\vh(k)|^2\right)^{1/3}\\
		\leq& \left(c_1\frac{|A^{1/2}u|}{\ko}\right)^{2/3}<\infty.
	\end{align*}
		Therefore, we have 
	\begin{align*}
		|e^{b[\ln(\ko^{-1}A^{1/2}+e)]^{1+\epsilon}}u|^2 \leq& I_1+I_2\\
		\leq& I_1 + I_{21}^{1/4}I_{22}^{3/4}\nu \\
		=& I_1 + (|e^{b[\ln(A^{1/2}+e)]^{1+\epsilon}}u|^2)^{1/4}c_0^{1/2}\nu I_{22}^{3/4}\\
		\leq& I_1 + \frac{1}{4}|e^{b[\ln(A^{1/2}+e)]^{1+\epsilon}}u|^2+\frac{3}{4}c_0^{2/3}\nu^{4/3}I_{22},
	\end{align*}
and then 
	\begin{align*}
		|e^{b[\ln(\ko^{-1}A^{1/2}+e)]^{1+\epsilon}}u|^2 \leq& \frac{4}{3}I_1 + c_0^{2/3}I_{22}\\
		\leq& \frac{4}{3}c(\epsilon)|u|^2+(c_0c_1)^{2/3}\nu^{4/3}\ko^{-2/3}|A^{1/2}u|^{2/3}.
	\end{align*}		
\end{proof}
By the proceeding proposition, taking $\epsilon=1$ we  obtain the following.
\begin{thm}
\label{c54}
	If $u\in\mathcal{C}(\sigma)$, then $u\in E_b$, where $b := \frac{1}{64\sigma}$.
\end{thm}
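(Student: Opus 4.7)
The plan is to obtain Theorem \ref{c54} as an immediate specialization of Proposition \ref{t53} at the endpoint $\epsilon = 1$, since the proposition has already been proved in the preceding text. First I would read off that the parameter in the proposition,
\begin{equation*}
b = \frac{1}{2^{4+2\epsilon}\sigma},
\end{equation*}
becomes $b = \frac{1}{2^{6}\sigma} = \frac{1}{64\sigma}$ when $\epsilon = 1$, matching the constant announced in the statement. Next I would observe that the weight appearing in Proposition \ref{t53},
\begin{equation*}
b\bigl[\ln(\ko^{-1}A^{1/2} + e)\bigr]^{1+\epsilon},
\end{equation*}
collapses at $\epsilon = 1$ to $b[\ln(\ko^{-1}A^{1/2} + e)]^{2} = \phi_{b}(\ko^{-1}A^{1/2})$, which is precisely the symbol of $E^{b}$ in the definition of the $\CCG$ class $E_b = E(\phi_b)$.

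With these identifications in hand, the conclusion of Proposition \ref{t53}, specialized to $\epsilon = 1$, reads
\begin{equation*}
|u|_b^{2} = |E^{b}u|^{2} \leq \tfrac{4}{3}c(1)|u|^{2} + c_{0}^{1/3}\bigl(c_{1}|A^{1/2}u|\bigr)^{2/3}\nu^{4/3}\ko^{-2/3}.
\end{equation*}
The right-hand side is finite whenever $u \in \mathcal{C}(\sigma)$ (so that $c_{0}$ exists) and $u \in \mathcal{D}(A^{1/2})$; but $\mathcal{C}(\sigma) \subset C^{\infty}([0,L]^{2}) \cap H$, in particular $u \in \mathcal{D}(A^{1/2})$, so this finiteness is automatic. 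By the defining condition of $E_b$, namely $|E^{b}u| < \infty$, we conclude $u \in E_b$.

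The only delicate point is to confirm that $\epsilon = 1$ is genuinely admissible in Proposition \ref{t53}. Inspecting its proof, the restriction on $\epsilon$ enters only through the inequality $2\epsilon \leq 1 + \epsilon$, which becomes an equality at $\epsilon = 1$ but remains valid; hence the endpoint case causes no difficulty. Thus $\epsilon = 1$ is the optimal choice permitted by the proposition, and it yields exactly Theorem \ref{c54}. In this sense, the theorem is best understood not as a new result requiring an independent argument but as extracting the sharpest quadratic-logarithm Gevrey weight accessible from the Sobolev growth condition defining $\mathcal{C}(\sigma)$.
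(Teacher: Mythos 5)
Your proposal is correct and is exactly the paper's own argument: the theorem is obtained by specializing Proposition \ref{t53} to $\epsilon=1$, which gives $b = 2^{-(4+2)}\sigma^{-1} = \frac{1}{64\sigma}$ and turns the weight into $\phi_b(\ko^{-1}A^{1/2})$. Your additional check that the endpoint $\epsilon=1$ is admissible (since $2\epsilon\leq 1+\epsilon$ still holds there) is a sensible verification, but nothing beyond the paper's one-line deduction is needed.
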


In Corollary 7.5 of \cite{FJLRYZ13}, we have proved that if $0\in \mathcal{A}$, then $g\in\mathcal{C}(\frac{5}{2}\ln\beta_3)$, where $\beta_3$ is defined in Theorem \ref{t64}. Applying Theorem \ref{c54}, we obtain the following result.
\begin{cor}
\label{c55}
If $0\in \mathcal{A}$, then $g\in E_b$, where $b=\frac{1}{160\ln\beta_3}$.
\end{cor}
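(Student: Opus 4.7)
The plan is to chain together two results already in hand. By Corollary 7.5 of \cite{FJLRYZ13}, the hypothesis $0\in\mathcal{A}$ implies $g\in\mathcal{C}(\sigma)$ with $\sigma=\tfrac{5}{2}\ln\beta_3$. By Theorem \ref{c54} above (the $\epsilon=1$ specialization of Proposition \ref{t53}), membership in $\mathcal{C}(\sigma)$ upgrades to membership in $E_b$ with $b=\frac{1}{64\sigma}$. Substituting gives
\begin{equation*}
b=\frac{1}{64\cdot\frac{5}{2}\ln\beta_3}=\frac{1}{160\ln\beta_3},
\end{equation*}
which is exactly the asserted value.

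So the proof is essentially a one-line composition; there is no technical obstacle. The only thing to check is that the value of $\sigma$ produced in Corollary~7.5 of \cite{FJLRYZ13} is indeed the $\sigma$ that one feeds into Theorem \ref{c54}, i.e.\ that both results use the same normalization of the defining class $\mathcal{C}(\sigma)$ (the one recalled in \eqref{Csigdef}). Since the current paper explicitly imports the definition of $\mathcal{C}(\sigma)$ from \cite{FJLRYZ13} and states Corollary 7.5 there in those terms, this compatibility is automatic. I would therefore write the proof as a brief two-sentence argument: quote Corollary 7.5 to get $g\in\mathcal{C}(\tfrac{5}{2}\ln\beta_3)$, then invoke Theorem \ref{c54} with $\sigma=\tfrac{5}{2}\ln\beta_3$ to conclude $g\in E_b$ with $b=\frac{1}{160\ln\beta_3}$.
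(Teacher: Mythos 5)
Your proposal is correct and is exactly the paper's own argument: quote Corollary 7.5 of the cited reference to get $g\in\mathcal{C}(\tfrac{5}{2}\ln\beta_3)$, then apply Theorem \ref{c54} with $\sigma=\tfrac{5}{2}\ln\beta_3$ to obtain $b=\frac{1}{64\sigma}=\frac{1}{160\ln\beta_3}$. No gaps.
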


\section{The topological properties of the ``all for one, one for all law'' classes}
In this section we use the space $\mathcal{F} := \mathcal{C}^\infty\cap H$ with the Fr\'{e}chet topology defined by the following metric
\begin{equation}
	d(u, v) := \sum_{\alpha=1}^\infty \frac{1}{2^\alpha}\frac{|A^{\frac{\alpha}{2}}(u-v)|}{1+|A^{\frac{\alpha}{2}}(u-v)|}.
\end{equation}
Let
\begin{equation}
	E_{b,n}:=\{u\in E_b, |u|_b\leq n\}.
\end{equation}

\begin{lem}
$E_{b,n}$ is nowhere dense in $(\mathcal{F}, d)$.
\end{lem}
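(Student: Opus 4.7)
The plan is to prove $E_{b,n}$ is nowhere dense by establishing two facts: (a) $E_{b,n}$ is closed in $(\mathcal{F},d)$, and (b) $E_{b,n}$ has empty interior. For (a), I would observe that $d(u_m,u)\to 0$ forces each term of the defining series to tend to zero, hence $|A^{\alpha/2}(u_m-u)|\to 0$ for every $\alpha\geq 1$; this gives $L^2$-convergence and so pointwise convergence of the Fourier coefficients $\widehat{u_m}(k) \to \widehat{u}(k)$. Fatou's lemma applied to the nonnegative series defining $|u|_b^2 = L^2\sum_k e^{2\phi_b(|k|)}|\widehat{u}(k)|^2$ then yields $|u|_b \leq \liminf_m |u_m|_b \leq n$, so the limit lies in $E_{b,n}$.

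For (b), I would fix arbitrary $u\in E_{b,n}$ and $\varepsilon>0$ and produce $v\in \mathcal{F}\setminus E_{b,n}$ with $d(u,v)<\varepsilon$ by perturbing along a single high-frequency eigenfunction. Choose an integer $N$ with $\sum_{\alpha>N}2^{-\alpha}<\varepsilon/2$, select an eigenfunction $\omega_j$ of $A$ with $\lambda_j$ to be chosen large later, and set
\begin{equation*}
v := u + c\,\omega_j, \qquad c := N^{-1}\lambda_j^{-N/2}.
\end{equation*}
Then $|A^{\alpha/2}(v-u)| = |c|\lambda_j^{\alpha/2} = N^{-1}\lambda_j^{(\alpha-N)/2}\leq N^{-1}$ for $\alpha\leq N$, so the contribution of terms $\alpha\leq N$ to $d(u,v)$ is at most $1/N$ and the total is at most $1/N+\varepsilon/2<\varepsilon$ provided $N$ is large enough. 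On the other hand, since $u\in E_{b,n}$, the $\omega_j$-coefficient $a_j$ of $u$ satisfies $|a_j|\,e^{\phi_b(\ko^{-1}\lambda_j^{1/2})}\leq |u|_b\leq n$, and the corresponding mode of $v$ is $a_j+c$, so
\begin{equation*}
|v|_b \geq e^{\phi_b(\ko^{-1}\lambda_j^{1/2})}|a_j+c| \geq |c|\,e^{\phi_b(\ko^{-1}\lambda_j^{1/2})} - n.
\end{equation*}

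The main technical obstacle is verifying that this lower bound can be driven above $n$ by choosing $\lambda_j$ large, i.e., that
\begin{equation*}
N^{-1}\lambda_j^{-N/2}\,e^{\phi_b(\ko^{-1}\lambda_j^{1/2})} \longrightarrow \infty \quad\text{as } \lambda_j\to\infty.
\end{equation*}
This is where the superpolynomial growth of $\phi_b(\chi)=b[\ln(\chi+e)]^2$ in $\ln\chi$ pays off: taking logarithms, $\phi_b(\ko^{-1}\lambda_j^{1/2})$ grows quadratically in $\ln\lambda_j$, while $(N/2)\ln\lambda_j+\ln N$ is merely linear, so for every fixed $N$ the exponential wins. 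Taking $j$ large enough so that $|c|\,e^{\phi_b(\ko^{-1}\lambda_j^{1/2})}>2n$ yields $|v|_b>n$, hence $v\notin E_{b,n}$; combined with $d(u,v)<\varepsilon$ and closedness from step (a), this shows $E_{b,n}$ is nowhere dense.
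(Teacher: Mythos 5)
Your proof is correct, but it takes a genuinely different route from the paper. The paper proves that the inclusion $i_b:E_{b,n}\to(\mathcal{F},d)$ is compact (using $|A^{\alpha/2}u|\leq c_\alpha|u|_b$ for every $\alpha$ plus a diagonal argument), so that $\overline{E_{b,n}}$ is compact in $(\mathcal{F},d)$, and then concludes emptiness of the interior from a Riesz-type lemma: an infinite-dimensional locally convex space contains no compact set with nonempty interior (the case of a ball not centered at $0$ is reduced to the origin by a convexity/averaging trick). You instead argue directly: closedness of $E_{b,n}$ via pointwise convergence of Fourier coefficients and Fatou's lemma, and empty interior via an explicit high-frequency perturbation $v=u+c\,\omega_j$ with $c=N^{-1}\lambda_j^{-N/2}$, which is small in $d$ but has $|v|_b\geq |c|\,e^{\phi_b(\ko^{-1}\lambda_j^{1/2})}-n\to\infty$. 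Both arguments ultimately rest on the same quantitative fact — that $e^{\phi_b(|k|)}$ dominates every power of $|k|$, i.e.\ the superlogarithmic growth of $\phi_b$ — the paper using it to get the compact embedding, you using it to make the perturbation escape $E_{b,n}$. What the paper's route buys is brevity at the level of ideas and the stronger intermediate fact that $\overline{E_{b,n}}$ is compact; what your route buys is a completely elementary, self-contained construction that avoids the locally convex Riesz lemma and additionally establishes that $E_{b,n}$ itself is closed. The only points worth tightening are cosmetic: note that $\lambda_j^{(\alpha-N)/2}\leq 1$ for $\alpha\leq N$ requires $\lambda_j\geq 1$, which is harmless since you send $\lambda_j\to\infty$ anyway, and recall that $d$-convergence controls $|u_m-u|$ itself via Poincar\'e from the $\alpha=1$ term, which is what you need for the Fourier coefficients to converge.
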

\begin{proof}
	First, we prove $i_b: E_{b,n}\to (\mathcal{F}, d)$ is compact. Clearly, for all $\alpha\in\mathbb{N}$, there exist a constant $c_\alpha$ such that 
	\begin{equation}
	\label{e29}
		|A^{\frac{\alpha}{2}}u|\leq c_\alpha|u|_b.
	\end{equation}
	For any sequence $\{u_n\}\subset E_{b,n}$, we have that $\{|A^{\frac{\alpha}{2}}u_n|\}$ is bounded by (\ref{e29}). Therefore, there exists a subsequence $\{u_{n_m}\}$ which is convergent in $\mathcal{D}(A^{\frac{\alpha-1}{2}})$. Since this is true for any fixed $\alpha\in\mathbb{N}$, by the diagonal process, we obtain a subsequence, denoted by the same notation $\{u_{n_m}\}$ for convenience, which is convergent for any $\alpha\in\mathbb{N}$. Hence it is convergent in $\mathcal{C}^\infty$ with the metric $d(\cdot,\cdot)$. Therefore, $i_b$ is compact. Then, it follows that $\overline{i_b(E_{b,n})}$ is compact in $(\mathcal{F}, d)$.
	
	Secondly, suppose $i_b(E_{b,n})$ is not nowhere dense. Then there exists a ball $\overline{B(x_0, \epsilon)}\subset \overline{i_b(E_{b,n})}$. Clearly, $\overline{B(x_0, \epsilon)}$ is compact.
	
	If $x_0=0$, this contradicts the extension of the classical Riesz's Lemma for normed spaces to locally convex topological vector space,  since $\mathcal{C}^\infty$ is infinite-dimensional space.
	
	If $x_0\neq 0$, consider a convex open neighborhood $N_{x_0}\subset \overline{B(x_0, \epsilon)}$. We have that $\overline{N_{x_0}}$ and $-\overline{N_{x_0}}$ both are compact and convex. Let $f: \overline{N_{x_0}}\times (-\overline{N_{x_0}})\ni (x_1, x_2)\mapsto \frac{x_1+x_2}{2} \in \mathcal{C}^\infty\cap H$. Clearly, $f$ is continuous. Therefore the range $R(f)$ is compact. Since $\frac{1}{2}(\overline{N_{x_0}}+(-\overline{N_{x_0}}))$ is an open neighborhood of $0$ in $R(f)$. For the same reason as above, we get a contradiction. 	
\end{proof}

Due to the above lemma, it follows that
\begin{thm}
\label{c62}
	$\cup_{m=1}^\infty \cup_{n=1}^\infty E_{\frac{b}{m},n}$ is of first (Baire) category in $(\mathcal{F}, d)$.
\end{thm}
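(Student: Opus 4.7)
The plan is to derive Theorem \ref{c62} as an almost immediate corollary of the preceding lemma. The lemma is stated for a fixed parameter $b$, but its proof does not use any special property of $b$ beyond $b>0$; the key ingredients are that each bound $|u|_{b}\le n$ controls every Sobolev norm (the inequality $|A^{\alpha/2}u|\le c_{\alpha}|u|_{b}$ with constants $c_{\alpha}$ depending on $\alpha$ and $b$ but not on $u$), and that $\mathcal{F}$ is an infinite-dimensional locally convex space so that no ball in $(\mathcal{F},d)$ can be precompact. Thus the lemma applies verbatim with $b$ replaced by $b/m$ for any $m\in\mathbb{N}$, giving that $E_{b/m,n}$ is nowhere dense in $(\mathcal{F},d)$ for every pair $(m,n)\in\mathbb{N}\times\mathbb{N}$.

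With this in hand, the conclusion is purely set-theoretic/topological. The indexing set $\mathbb{N}\times\mathbb{N}$ is countable, so
\[
\bigcup_{m=1}^{\infty}\bigcup_{n=1}^{\infty} E_{b/m,n}
\]
is a countable union of nowhere dense sets in the Fr\'echet space $(\mathcal{F},d)$. By the very definition of the first Baire category, such a union is meagre, which is precisely the statement of the theorem.

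There is, strictly speaking, no hard step; the only thing to verify carefully is that the proof of the preceding lemma goes through uniformly in the parameter. The dependence of the constants $c_{\alpha}$ in \eqref{e29} on $b$ is harmless: for fixed $b/m$ and fixed $n$, the set $E_{b/m,n}$ has the same structural properties (its image under $i_{b/m}$ is relatively compact in $(\mathcal{F},d)$, so its closure cannot contain any ball in the infinite-dimensional space $\mathcal{F}$). The Riesz-lemma argument and the translation-to-the-origin trick using $f(x_{1},x_{2})=(x_{1}+x_{2})/2$ are insensitive to the value of $b$. Hence the proof reduces to invoking the lemma countably many times and recalling the definition of first category.
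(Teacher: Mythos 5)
Your argument is correct and is essentially the paper's own: the lemma that $E_{b,n}$ is nowhere dense holds for any parameter $b>0$ (so in particular for each $b/m$), and the double union over $(m,n)\in\mathbb{N}\times\mathbb{N}$ is then a countable union of nowhere dense sets, hence of first Baire category by definition. Your extra remark that the constants $c_\alpha$ in \eqref{e29} may depend on $b$ but this is harmless is a sensible point of care, though the paper does not even comment on it.
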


From the above theorem and Corollary \ref{c55}, we have that the conjecture that $g\neq 0$ implies $0\notin \mathcal{A}_g$ is ``almost'' true in the following sense.
\begin{thm}
\label{t73}
	If $0\in \mathcal{A}$, then $g\in E_b$ ($b$ is defined in Corollary \ref{c55}) where $E_b$ has the property that $E_b = \cup_{n=1}^\infty E_{b,n}$ is of first (Baire) category in $(\mathcal{F}, d)$.
\end{thm}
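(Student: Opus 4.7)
The plan is to assemble this theorem directly from ingredients already stated earlier in the paper: Corollary \ref{c55} provides the containment $g\in E_b$, while the preceding lemma (the one asserting $E_{b,n}$ is nowhere dense) provides the topological conclusion. So the proof amounts to gluing these together and observing an elementary set-theoretic identity.

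First I would invoke Corollary \ref{c55}: the hypothesis $0\in\mathcal{A}$ yields $g\in E_b$ with $b=\frac{1}{160\ln\beta_3}$, which is exactly the $b$ named in the statement. No further work is needed for this half of the conclusion.

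Next I would establish the set-theoretic identity $E_b=\bigcup_{n=1}^\infty E_{b,n}$. This is immediate from the definition $E_{b,n}=\{u\in E_b:|u|_b\leq n\}$: a function $u$ lies in $E_b$ exactly when $|u|_b<\infty$, which in turn is equivalent to the existence of some positive integer $n$ with $|u|_b\leq n$.

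Finally I would record the first-category assertion. Each $E_{b,n}$ was shown in the lemma preceding Theorem \ref{c62} to be nowhere dense in $(\mathcal{F},d)$, and by definition a countable union of nowhere dense sets is of the first Baire category. Combining with the previous step gives that $E_b$ itself is of first category in $(\mathcal{F},d)$, completing the proof. There is essentially no obstacle here, since all the analytic work (the Gevrey estimate from Proposition \ref{t53}, the compactness of the inclusion $i_b$, and the Riesz-type argument ruling out a compact neighborhood in the infinite-dimensional Fr\'echet space $\mathcal{F}$) has already been carried out; the content of Theorem \ref{t73} is the juxtaposition of these facts, reading the conjecture ``$g\neq 0\Rightarrow 0\notin\mathcal{A}_g$'' as a statement that fails only on a meagre set of forces.
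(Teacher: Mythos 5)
Your proposal is correct and matches the paper's own (implicit) argument: the paper likewise obtains $g\in E_b$ from Corollary \ref{c55} and the first-category claim from the nowhere-density of each $E_{b,n}$ (via Theorem \ref{c62}), with the identity $E_b=\cup_{n=1}^\infty E_{b,n}$ immediate from the definition of $E_{b,n}$. Nothing further is needed.
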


\section{Dynamical properties of the NSE in $E_b$}
This section is devoted to getting a new estimate for solutions in the global attractor with the norm $|\cdot|_b$ in the strip $\mathcal{S}(\delta)$, where
\begin{equation}
		\mathcal{S}(\delta):=\{\zeta\in\mathbb{C}:|\Im(\zeta)|<\delta\}.
	\end{equation}
First, for the nonlinear term $B(\cdot, \cdot)$ we need the following estimate. 
\begin{lem}
\label{l55}
If $u\in E_b$, then $B(u,u)\in E_b$ and 
\begin{align}
\label{e55}
	|E^bB(u,u)| \leq \frac{c_3c_A}{\ko^{2b\ln 2}}&\left(|A^{b\ln2}u|^{1/2}|A^{1+b\ln2}u|^{1/2}|A^{1/2}E^bu|\right.\\
	&+\left.|A^{1/2+b\ln2}u|^{1/2}|A^{3/2+b\ln2}u|^{1/2}|E^bu|\right), \nonumber
\end{align}
where  
\begin{equation}
 c_3 = e^{b(\ln2)^2}(1+e)^{2b\ln2}.
\end{equation}
\end{lem}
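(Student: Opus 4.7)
The plan is a direct Fourier-side estimate of $|E^bB(u,u)|^2$, reducing to a combination of a sub-additivity property of the weight $\phi_b$, Parseval's identity, and Agmon's inequality. Structurally, the argument is the Gevrey analog of Lemma~\ref{lemma61}, with the Fourier multiplier $e^{\phi_b(|k|)}$ playing the role of the polynomial weight $|k|^\alpha$.

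First I would open with $|E^bB(u,u)|^2 = L^2 \sum_{k\neq 0} e^{2\phi_b(|k|)}|\widehat{B(u,u)}(k)|^2$ by Parseval, and use the Fourier representation of $B(u,u)$ together with the divergence-free condition $\hat u(j)\cdot j = 0$ to obtain $|\widehat{B(u,u)}(k)|\leq \kappa_0 \sum_{j+l=k,\,j,l\neq 0} |l|\,|\hat u(j)||\hat u(l)|$. The heart of the argument is then the sub-additivity
\begin{equation*}
e^{\phi_b(|k|)} \leq c_3\, M^{2b\ln 2}\, e^{\phi_b(M)}, \qquad M := \max(|j|,|l|),
\end{equation*}
which is exactly what forces the constant $c_3 = e^{b(\ln 2)^2}(1+e)^{2b\ln 2}$ stated in the lemma; this is obtained from $|k|+e\leq 2(M+e)$, the expansion $(\ln 2+\ln(M+e))^2 = (\ln 2)^2+2\ln 2\cdot\ln(M+e)+(\ln(M+e))^2$, and the elementary bound $(M+e)\leq (1+e)M$ for $M\geq 1$. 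I would then split $\{j+l=k\}$ into the two regions $\{|l|\geq|j|\}$ and $\{|j|>|l|\}$, apply the sub-additivity in each so that the weight $M^{2b\ln 2}e^{\phi_b(M)}$ sits on whichever index is larger, and recognize the resulting convolutions over $k$ as Fourier coefficients of products $FG$ in physical space. Parseval combined with $\|FG\|_{L^2}\leq \|F\|_{L^\infty}\|G\|_{L^2}$, together with Agmon's inequality $|v|_\infty \leq c_A|v|^{1/2}|Av|^{1/2}$ applied to $v = A^{b\ln 2}u$ in one case and $v = A^{1/2+b\ln 2}u$ in the other, produces precisely the Agmon products $|A^{b\ln 2}u|^{1/2}|A^{1+b\ln 2}u|^{1/2}$ and $|A^{1/2+b\ln 2}u|^{1/2}|A^{3/2+b\ln 2}u|^{1/2}$ on the $L^\infty$ sides, paired respectively with $|A^{1/2}E^bu|$ and $|E^bu|$ on the $L^2$ sides. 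The dimensional factor $\kappa_0^{-2b\ln 2}$ arises upon converting the raw wavenumber power $M^{2b\ln 2}$ into the normalized symbol $(\kappa_0 M)^{2b\ln 2}$ of $A^{b\ln 2}$, yielding the overall constant $c_3c_A/\kappa_0^{2b\ln 2}$.

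The principal delicacy lies in the last step: once the sub-additivity has pushed the combined factor $M^{2b\ln 2}e^{\phi_b(M)}$ (together with the derivative weight $|l|$) onto a single side of each convolution, one must partition these three weights between the $L^\infty$ and the $L^2$ halves of the Parseval pairing so that the Agmon form comes out at exactly the Sobolev order $b\ln 2$ (respectively $1/2+b\ln 2$), and the $E^b$-weighted norm lands on precisely $|A^{1/2}E^bu|$ (respectively $|E^bu|$). Carrying out this redistribution cleanly in both cases, with the appropriate choice of which factor to control in $L^\infty$ versus $L^2$, is what extracts the two specific terms of the stated inequality.
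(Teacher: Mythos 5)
Your overall architecture --- a Fourier-side convolution estimate, a split according to which convolution index is larger, Agmon's inequality on the low-order leg, and the constant $c_3$ arising from $|k|+e\le 2(\cdot+e)$ --- is the same as the paper's (the paper phrases it by duality, pairing $B(u,u)$ against $E^bw$ for arbitrary $w\in E_b$, but that difference is cosmetic). The genuine gap is in your key weight inequality. You assert $e^{\phi_b(|k|)}\le c_3\,M^{2b\ln 2}e^{\phi_b(M)}$ with $M=\max(|j|,|l|)$, and the derivation you sketch ($|k|+e\le 2(M+e)$, expand the square, $(M+e)\le(1+e)M$) yields exactly that and nothing stronger. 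But the proof needs the polynomial factor on the \emph{minimum} index: $e^{\phi_b(|k|)}\le c_3\,m^{2b\ln 2}e^{\phi_b(M)}$ with $m=\min(|j|,|l|)$. Only then does $m^{2b\ln 2}$ attach to the low-frequency leg, turning it into $A^{b\ln 2}u$ (or $A^{1/2+b\ln 2}u$ when that leg also carries the derivative), which is the factor you estimate in $L^\infty$ by Agmon, while the high-frequency leg carries only $e^{\phi_b}$ (plus possibly one derivative) and lands in $L^2$ as exactly $|A^{1/2}E^bu|$ or $|E^bu|$. With your version both the power and the exponential sit on the max index, so in the region $|l|\ge|j|$ the $L^2$ leg becomes $|A^{1/2+b\ln 2}E^bu|$ (or, if you swap which factor goes to $L^\infty$, you get $E^b$-weighted Agmon norms); neither is controlled by $|A^{1/2}E^bu|$, the stated inequality does not follow, and the downstream Gronwall argument in Theorem~\ref{t63} would break because the dissipation only absorbs $|A^{1/2}E^bu|^2$.

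The missing ingredient is a monotonicity step: for $c>d\ge e$ the function $\psi(x)=[\ln(c+x)]^2-[\ln(d+x)]^2$ is decreasing (since $t\mapsto t^{-1}\ln t$ is decreasing for $t\ge e$). Applying this with $c=m+e$, $d=e$ and evaluating at $x=M\ge m$ gives $\phi_b(|k|)-\phi_b(M)\le b\{[\ln(m+M+e)]^2-[\ln(M+e)]^2\}\le b\{[\ln(2m+e)]^2-[\ln(m+e)]^2\}\le b\ln 2\,[\ln 2+2\ln(m+e)]$, which transfers the entire logarithmic excess onto the minimum index and yields $c_3\,m^{2b\ln 2}$ with the same constant $c_3=e^{b(\ln 2)^2}(1+e)^{2b\ln 2}$. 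Once you replace your sub-additivity by this version, the rest of your outline (the two regions, the product/convolution pairing, Agmon, and the $\kappa_0^{-2b\ln 2}$ bookkeeping) goes through and coincides with the paper's estimate.
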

\begin{proof}
By definition, for any $w\in E_b$, we have
	\begin{align*}
		I :=& (E^bB(u,u), w) = (B(u,u), E^bw)\\
		  =& L^2\sum_{\substack{h,j,k\in\mathbb{Z}^2\setminus\{0\}\\h+j+k=0}}(\hat{u}(h)\cdot j)(\hat{u}(j)\cdot\hat{w}(k))e^{b(\ln(|k|+e))^2}\\
		  =& L^2\sum_{\substack{h,j,k\in\mathbb{Z}^2\setminus\{0\}\\h+j+k=0\\|h|\leq|j|}}\cdots +L^2\sum_{\substack{h,j,k\in\mathbb{Z}^2\setminus\{0\}\\h+j+k=0\\|h|>|j|}}\cdots\\
         =:& I_1 + I_2.
	\end{align*}

For $I_1$, it is easy to check that $\psi(x) = [\ln(c+x)]^2-[\ln(d+x)]^2$ is decreasing for $e\leq d < c$ so that 
\begin{equation*}
	e^{b(\ln(|k|+e))^2-b(\ln(|j|+e))^2} \leq e^{b(\ln(|h|+|j|+e))^2-b(\ln(|j|+e))^2}\leq e^{b(\ln(2|h|+e))^2-b(\ln(|h|+e))^2}.
\end{equation*}
Moreover, since 
	\begin{align*}
	(\ln(2|h|+e))^2-(\ln(|h|+e))^2 =& \ln \frac{2|h|+e}{|h|+e}\ln[(2|h|+e)(|h|+e)]\\
	\leq& \ln2[\ln2+2\ln(|h|+e)],
	\end{align*}
	it follows that
	\begin{align}
	\label{e57}
		I_1 =& L^2\sum_{\substack{h,j,k\in\mathbb{Z}^2\setminus\{0\}\\h+j+k=0\\|h|\leq |j|}}(\widehat{u}(h)\cdot j)(\widehat{E^bu}(j)\cdot\hat{w}(k))e^{b(\ln(|k|+e))^2-b(\ln(|j|+e))^2}\\
		\leq&L^2\sum_{\substack{h,j,k\in\mathbb{Z}^2\setminus\{0\}\\h+j+k=0\\|h|\leq |j|}}(\hat{u}(h)\cdot j)(\widehat{E^bu}(j)\cdot\hat{w}(k))e^{b(\ln2)^2}(|h|+e)^{2b\ln2}\nonumber\\
		\leq&L^2\sum_{\substack{h,j,k\in\mathbb{Z}^2\setminus\{0\}\\h+j+k=0\\|h|\leq |j|}}(\hat{u}(h)\cdot j)(\widehat{E^bu}(j)\cdot\hat{w}(k))e^{b(\ln2)^2}(1+e)^{2b\ln2}|h|^{2b\ln2}\nonumber\\
		=& \frac{c_3c_A}{\ko^{2b\ln 2}}|A^{b\ln2}u|^{1/2}|A^{1+b\ln2}u|^{1/2}|A^{1/2}E^bu||w|.\nonumber
	\end{align}
	
	By estimating $I_2$ in the same way, just replacing the right hand side of the first equality of (\ref{e57}) by 
\begin{equation}	
	L^2\sum_{\substack{h,j,k\in\mathbb{Z}^2\setminus\{0\},\\h+j+k=0,\\|j|\leq |h|}}(\widehat{E^bu}(h)\cdot j)(\widehat{u}(j)\cdot\hat{w}(k))e^{b(\ln(|k|+e))^2-b(\ln(|h|+e))^2},
\end{equation}	 
	 we obtain
	\begin{align}
	\label{e58}
		I_2 \leq& \frac{c_3c_A}{\ko^{2b\ln 2}}|A^{1/2+b\ln2}u|^{1/2}|A^{3/2+b\ln2}u|^{1/2}|E^bu||w|.
	\end{align}
	
	Since (\ref{e57}) and (\ref{e58}) are true for arbitrary $w$, then we infer (\ref{e55}).
\end{proof}

Using interpolation, it is easy to obtain the following estimates on $A^\alpha u$ for any $\alpha>0$ in the strip $\mathcal{S}(\delta)$. 
\begin{lem}
\label{l56}
Suppose $\alpha \geq 3$, and $\alpha \in [\frac{\gamma}{2}, \frac{\gamma+1}{2})$. Then 
\begin{equation}
	|A^{\frac{\alpha}{2}} u| \leq \Rt{\alpha}\nu\ko^\alpha,
\end{equation}
where 
\begin{equation}
\label{e510}
	\Rt{\alpha}^2 = C(g)\beta_1^{4^\gamma(6\alpha+1-3\gamma)}\beta_2^{-\gamma^2+(4\alpha-1)\gamma+11\alpha},
\end{equation}
and $C(g), \beta_1, \beta_2$ are defined in Theorem \ref{t64}.
\end{lem}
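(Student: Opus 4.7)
The plan is to combine the integer-index estimates of Theorem \ref{t64} with the spectral monotonicity $|A^s u| \leq |A^t u|/\ko^{2(t-s)}$ (valid for $0 \leq s \leq t$, a consequence of $A \geq \ko^2 I$), and then take a weighted geometric mean of the resulting two bounds.

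First I would observe that the hypothesis $\alpha \geq 3$ together with $\alpha \in [\gamma/2,(\gamma+1)/2)$ forces $\gamma \geq 6$, so in particular $\gamma > 4$. Consequently the explicit estimates of Theorem \ref{t64} apply to both $|A^{\gamma/2} u(\zeta)|$ and $|A^{(\gamma+1)/2} u(\zeta)|$, uniformly in $\zeta \in \mathcal{S}(\delta_3)$. Since also $\alpha \leq (\gamma+1)/2 \leq \gamma$, spectral monotonicity followed by Theorem \ref{t64} yields the two competing bounds
\[
|A^{\alpha/2} u| \leq \frac{|A^{\gamma/2} u|}{\ko^{\gamma - \alpha}} \leq \Rt{\gamma}\,\nu\,\ko^\alpha,
\qquad
|A^{\alpha/2} u| \leq \frac{|A^{(\gamma+1)/2} u|}{\ko^{\gamma+1-\alpha}} \leq \Rt{\gamma+1}\,\nu\,\ko^\alpha.
\]

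Next I would write $|A^{\alpha/2}u|^2 = |A^{\alpha/2}u|^{2\theta}\cdot|A^{\alpha/2}u|^{2(1-\theta)}$, apply the first bound to the factor raised to $2\theta$ and the second to the factor raised to $2(1-\theta)$, and thereby obtain
\[
|A^{\alpha/2} u| \leq \Rt{\gamma}^{\theta}\,\Rt{\gamma+1}^{\,1-\theta}\,\nu\,\ko^\alpha
\]
for every $\theta \in [0,1]$. Choosing $\theta := \gamma + 1 - 2\alpha$ (which lies in $[0,1]$ precisely because $\alpha \in [\gamma/2,(\gamma+1)/2]$) and setting $\Rt{\alpha} := \Rt{\gamma}^{\gamma+1-2\alpha}\Rt{\gamma+1}^{\,2\alpha-\gamma}$ gives the stated inequality.

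To recover formula (\ref{e510}) I would square and substitute the explicit bounds $\Rt{\gamma}^2 \leq C(g)\beta_1^{4^\gamma}\beta_2^{\gamma^2+9\gamma/2}$ and $\Rt{\gamma+1}^2 \leq C(g)\beta_1^{4^{\gamma+1}}\beta_2^{(\gamma+1)^2 + 9(\gamma+1)/2}$ from Theorem \ref{t64}. The $\beta_1$-exponent simplifies cleanly via $4^\gamma[\theta + 4(1-\theta)] = 4^\gamma(4-3\theta) = 4^\gamma(6\alpha + 1 - 3\gamma)$, while a short expansion of $(\gamma^2 + 9\gamma/2)\theta + ((\gamma+1)^2 + 9(\gamma+1)/2)(1-\theta)$ produces $-\gamma^2 + (4\alpha-1)\gamma + 11\alpha$. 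The main obstacle is purely this algebraic bookkeeping — especially expanding and collecting the $\beta_2$-exponent — and there is no real analytic difficulty beyond verifying that $\theta \in [0,1]$, which is exactly the hypothesis $\alpha \in [\gamma/2,(\gamma+1)/2)$.
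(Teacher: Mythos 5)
Your argument is correct and reproduces formula \eqref{e510} exactly (the exponent bookkeeping checks out: $4^\gamma(4-3\theta)=4^\gamma(6\alpha+1-3\gamma)$ and the $\beta_2$-exponent collapses to $-\gamma^2+(4\alpha-1)\gamma+11\alpha$ with $\theta=\gamma+1-2\alpha$), but it takes a different route from the paper. The paper's proof is a one-line application of the interpolation (moment) inequality, i.e.\ the log-convexity of $s\mapsto |A^{s}u|$:
\begin{equation*}
|A^{\alpha}u|\;\leq\;|A^{\frac{\gamma}{2}}u|^{\gamma+1-2\alpha}\,|A^{\frac{\gamma+1}{2}}u|^{2\alpha-\gamma},
\end{equation*}
which is precisely why the hypothesis $\alpha\in[\frac{\gamma}{2},\frac{\gamma+1}{2})$ appears: it places the exponent between the two integer-index endpoints controlled by Theorem \ref{t64}. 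You instead use only the spectral lower bound $A\geq\ko^2 I$ to get two separate bounds $|A^{\alpha/2}u|\leq \Rt{\gamma}\nu\ko^\alpha$ and $|A^{\alpha/2}u|\leq \Rt{\gamma+1}\nu\ko^\alpha$, and then take a formal weighted geometric mean of the two \emph{numbers}. This is more elementary (no log-convexity needed), and it has the incidental virtue of proving the statement exactly as printed, i.e.\ for $|A^{\alpha/2}u|$ -- the paper's own proof actually estimates the larger quantity $|A^{\alpha}u|$, so there is a statement/proof index mismatch in the original that your version sidesteps. The price is that your inequality is strictly weaker as an estimate: since $|A^{\alpha/2}u|\leq\min(\Rt{\gamma},\Rt{\gamma+1})\nu\ko^\alpha$ already holds, your geometric mean serves only to reproduce the particular constant \eqref{e510}, whereas the paper's interpolation genuinely controls the intermediate norm. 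For the purposes of Theorem \ref{t63}, where only the stated bound on $|A^{(3+2b\ln 2)/2}u|$ is used, either argument suffices.
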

\begin{proof}
	By interpolation, we have
	\begin{equation}
		|A^\alpha u| \leq |A^{\frac{\gamma}{2}}u|^{\gamma+1-2\alpha}|A^{\frac{\gamma+1}{2}}u|^{2\alpha-\gamma}.
	\end{equation}
		Using Theorem \ref{thm33}, it follows that
	\begin{align*}
		|A^\alpha u|^2 \leq& [C(g)\beta_1^{4^{\gamma}}\beta_2^{\gamma^2+ \frac{9}{2}\gamma}]^{\gamma+1-2\alpha}[C(g)\beta_1^{4^{\gamma+1}}\beta_2^{(\gamma+1)^2+ \frac{9}{2}(\gamma+1)}]^{2\alpha-\gamma}\nu^2\ko^{2\alpha}\\
		=& C(g)\beta_1^{4^\gamma(6\alpha+1-3\gamma)}\beta_2^{-\gamma^2+(4\alpha-1)\gamma+11\alpha}\nu^2\ko^{2\alpha}\\
		=& \Rt{\alpha}^2\nu^2\ko^{2\alpha}.
	\end{align*}
\end{proof}

Using Lemma \ref{l55} and Lemma \ref{l56}, we are ready to obtain an estimate of $|u|_b$ in the strip $\mathcal{S}(\delta)$.
\begin{thm}
\label{t63}
	If $0\in\mathcal{A}$ and if $u(t)$, $t\in \mathbb{R}$ is any solution of the NSE in $\mathcal{A}$, then $u(t)$ satisfies 
\begin{equation}
\label{rnew}
|u(\zeta)|_b \leq \Rt{new}\nu,   \quad\forall\, \zeta \in \mathcal{S}(\delta),
\end{equation} 
where
\begin{equation}
\label{rtnew}
	\Rt{new}^2 := e^{M_1\delta\nu\ko^2}(\frac{4}{3}c_2\Rt{1}^2+c_0^{\frac{1}{3}}(c_1\Rt{1})^{\frac{2}{3}})+M_2(e^{M_1\delta\nu\ko^2}-1),
\end{equation}
\begin{equation}
	M_1 := 4(c_3c_A)^2\Rt{3+2b\ln2}^2+2\sqrt{2}c_3c_A\Rt{3+2b\ln2},
\end{equation}
and 
\begin{equation}
	M_2 := \frac{2\sqrt{2}{|A^{-\frac{1}{2}}E^bg|^2}}{(2\sqrt{2}(c_3c_A)^2\Rt{3+2b\ln2}^2+2c_3c_A\Rt{3+2b\ln2})\nu^4\ko^2}.
\end{equation}
\end{thm}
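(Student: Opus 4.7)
The approach is a Gevrey-energy inequality plus Gronwall, with the initial value supplied by Proposition \ref{t53}. Fix $\zeta_0$ on the trajectory with $\Im\zeta_0=0$, a target $\zeta\in\mathcal{S}(\delta)$, and parametrize the segment $[\zeta_0,\zeta]$ by a real variable $s\in[0,|\zeta-\zeta_0|]$. Apply $E^b$ to \eqref{cnse}, pair with $E^b u(\zeta(s))$ in $H_\mathbb{C}$, and take twice the real part. The analyticity provided by Theorem \ref{thm33} justifies this manipulation, and after working on the real axis and extending analytically, one arrives at
\begin{equation*}
\tfrac{d}{ds}|u|_b^2 + 2\nu|A^{1/2}E^b u|^2 \le 2|(E^b B_\mathbb{C}(u,u),E^b u)| + 2|(E^b g,E^b u)|.
\end{equation*}

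For the forcing, split $A^{-1/2}\cdot A^{1/2}$ to obtain $2|(E^b g,E^b u)|\le 2|A^{-1/2}E^b g|\,|A^{1/2}E^b u|$. For the nonlinearity, invoke Lemma \ref{l55} together with its complex-bilinear counterpart (which costs an extra $\sqrt{2}$ as in Lemma \ref{lemma61}) to bound $|E^b B_\mathbb{C}(u,u)|$. Each intermediate Sobolev norm $|A^{\alpha/2}u|$ with $\alpha\in\{2b\ln 2,\,1+2b\ln 2,\,2+2b\ln 2,\,3+2b\ln 2\}$ appearing there is bounded by $\Rt{\alpha}\nu\ko^\alpha$ via Lemma \ref{l56}; since all four are dominated by $\Rt{3+2b\ln 2}$, they consolidate to
\begin{equation*}
|(E^b B_\mathbb{C}(u,u),E^b u)| \le \sqrt{2}c_3c_A\Rt{3+2b\ln 2}\nu\ko\bigl(|A^{1/2}E^b u| + \ko|u|_b\bigr)|u|_b.
\end{equation*}

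Substituting these bounds and applying Young's inequality to absorb each appearance of $|A^{1/2}E^b u|$ into the dissipation $2\nu|A^{1/2}E^b u|^2$ produces a differential inequality of the form $\tfrac{d}{ds}|u|_b^2\le M_1\nu\ko^2|u|_b^2 + M_1M_2\nu^3\ko^2$, whose coefficients match the statement. Gronwall over $s\in[0,\delta]$ then gives
\begin{equation*}
|u(\zeta)|_b^2 \le e^{M_1\delta\nu\ko^2}|u(\zeta_0)|_b^2 + M_2\nu^2\bigl(e^{M_1\delta\nu\ko^2}-1\bigr).
\end{equation*}
The initial value $|u(\zeta_0)|_b^2$ is bounded by Proposition \ref{t53} with $\epsilon=1$: Theorem \ref{thm33} supplies $|u(\zeta_0)|\le \Rt{1}\nu/\ko$ (via Poincar\'e) and $|A^{1/2}u(\zeta_0)|\le \Rt{1}\nu\ko$, so the right-hand side of that proposition evaluates to precisely $\bigl(\tfrac{4}{3}c_2\Rt{1}^2 + c_0^{1/3}(c_1\Rt{1})^{2/3}\bigr)\nu^2$, reproducing the bracket in \eqref{rtnew}.

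The main obstacle I anticipate is constant-tracking: one must carefully consolidate the four distinct Sobolev norms produced by Lemma \ref{l55}, account for the factors of $\sqrt{2}$ introduced by the complex bilinear structure, and reconcile the Young's inequality splittings with the precise expressions for $M_1$ and $M_2$ in the statement. A secondary subtlety is rigorously justifying the complex-time energy identity, which should follow from the $H_\mathbb{C}$-analyticity of $\zeta\mapsto u(\zeta)$ guaranteed by Theorem \ref{thm33}.
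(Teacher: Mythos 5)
Your proposal follows essentially the same route as the paper's proof: the initial value on the real axis comes from Proposition \ref{t53} combined with $|A^{1/2}u|\leq \Rt{1}\nu\ko$, the Gevrey energy inequality is obtained by pairing \eqref{cnse} with $E^{2b}u$ along rays from the real axis (the paper takes $t_0+\rho e^{i\theta}$, $\theta\in[-\pi/4,\pi/4]$, which is where the various $\sqrt 2$'s and the ray length $\sqrt2\delta$ in $e^{\sqrt2\delta\eta_2}=e^{M_1\delta\nu\ko^2}$ come from), the nonlinearity is handled by Lemma \ref{l55} with the intermediate norms consolidated into $\Rt{3+2b\ln2}$ via Lemma \ref{l56}, and Young plus Gronwall finish the argument. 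The only slips are cosmetic: Poincar\'e gives $|u(\zeta_0)|\leq\Rt{1}\nu$ rather than $\Rt{1}\nu/\ko$ (your final evaluation of the bracket is nonetheless correct), and the Gronwall parameter should run up to $\sqrt2\delta$ rather than $\delta$ to reach the full strip.
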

\begin{proof}
	It is shown in Remark 7.3 in \cite{FJLRYZ13} that if $0 \in \mathcal{A}$, then there exists a constant $\beta_3$ such that 
	\begin{equation}
		u(t_0)\in \mathcal{C}(\frac{3}{2}\ln\beta_3), \quad \forall\, t_0\in\mathbb{R},
	\end{equation}
	Applying Corollary \ref{c54} and Proposition \ref{t53}, we get that 
	\begin{equation}
		u(t_0) \in E_{(96\ln\beta_3)^{-1}},
	\end{equation}
	and
	\begin{align}
	\label{e516}
		|E^bu(t_0)|^2 \leq& \frac{4}{3}c_2|u|^2+c_0^{\frac{1}{3}}(c_1|A^{\frac{1}{2}}u|)^{\frac{2}{3}}\nu^{\frac{4}{3}}\ko^{-\frac{2}{3}}\\
		\leq& \frac{4}{3}c_2\Rt{1}^2\nu^2+c_0^{\frac{1}{3}}(c_1\Rt{1})^{\frac{2}{3}}\nu^2,\nonumber
	\end{align}
	where 
	\begin{equation}
		c_2 := c(\epsilon)|_{\epsilon=1} = e^{2b(\ln(e^2+e))^2}.
	\end{equation}
Taking the inner product of (\ref{cnse}) with $E^{2b}u$ we obtain 
\begin{equation*}
	\frac{1}{2}\frac{d}{d\rho}|E^bu(t_0+\rho e^{i\theta})|^2+\nu \cos\theta |A^{1/2}E^bu|^2 \leq |A^{\frac{1}{2}}E^bu||A^{-\frac{1}{2}}E^bg|+|(E^bB(u,u), E^bu)|,
\end{equation*}
where $\theta\in [-\pi/4, \pi/4]$.
Using Lemma \ref{l55}, we have that
\begin{align*}
	\frac{1}{2}\frac{d}{d\rho}|E^bu(t_0&+\rho e^{i\theta})|^2+\nu \cos\theta |A^{1/2}E^bu|^2 \leq \frac{c_3c_A}{\ko^{2b\ln 2}}|A^{b\ln2}u|^{1/2}|A^{1+b\ln2}u|^{1/2}|A^{1/2}E^bu||E^bu|\\
	&+\frac{c_3c_A}{\ko^{2b\ln 2}}|A^{1/2+b\ln2}u|^{1/2}|A^{3/2+b\ln2}u|^{1/2}|E^bu|^2+|A^{\frac{1}{2}}E^bu||A^{-\frac{1}{2}}E^bg| \;.
\end{align*}
By Young's inequality, we get
\begin{align*}
	\frac{d}{d\rho}|E^bu(t_0+\rho e^{i\theta})|^2&+\nu \cos\theta |A^{1/2}E^bu|^2 \leq \frac{2}{\nu\cos\theta}(\frac{c_3c_A}{\ko^{2b\ln 2}})^2|A^{b\ln2}u||A^{1+b\ln2}u||E^bu|^2\\
	&+\frac{2c_3c_A}{\ko^{2b\ln 2}}|A^{1/2+b\ln2}u|^{1/2}|A^{3/2+b\ln2}u|^{1/2}|E^bu|^2+2\frac{|A^{-\frac{1}{2}}E^bg|^2}{\nu\cos\theta} \\
	\leq& \eta_1 + \tilde{\eta}_2|E^bu|^2,
\end{align*}
where 
\begin{equation}
\label{e519}
	\eta_1 = \frac{2\sqrt{2}}{\nu}{|A^{-\frac{1}{2}}E^bg|^2}\;,
\end{equation}
and
\begin{equation*}
	\tilde{\eta}_2 := \frac{2\sqrt{2}}{\nu}(\frac{c_3c_A}{\ko^{2b\ln 2}})^2|A^{b\ln2}u||A^{1+b\ln2}u|+\frac{2c_3c_A}{\ko^{2b\ln 2}}|A^{1/2+b\ln2}u|^{1/2}|A^{3/2+b\ln2}u|^{1/2}.
\end{equation*}

For $\tilde{\eta}_2$, applying the Poincar\'{e} inequality and Lemma \ref{l56}, we have the following estimate
\begin{align}
\label{e521}
	\tilde{\eta}_2 \leq& \frac{2\sqrt{2}}{\nu}(\frac{c_3c_A}{\ko^{2b\ln 2}})^2\frac{1}{\ko^4}|A^{3/2+b\ln2}u|^2+\frac{2c_3c_A}{\ko^{2b\ln 2}}\frac{1}{\ko}|A^{3/2+b\ln2}u|\\
	\leq& \frac{2\sqrt{2}}{\nu}(\frac{c_3c_A}{\ko^{2b\ln 2}})^2\frac{1}{\ko^4}\Rt{3+2b\ln2}^2\nu^2\ko^{6+4b\ln2}+\frac{2c_3c_A}{\ko^{2b\ln 2}}\frac{1}{\ko}\Rt{3+2b\ln2}\nu\ko^{3+2b\ln2}\nonumber\\
	=& \left[2\sqrt{2}(c_3c_A)^2\Rt{3+2b\ln2}^2+2c_3c_A\Rt{3+2b\ln2}\right]\nu\ko^2=: \eta_2\;,\nonumber
\end{align}
where  $\Rt{3+2b\ln2}$ is defined as in (\ref{e510}).

Then we have
\begin{equation}
	\frac{d}{d\rho}|E^bu(t_0+\rho e^{i\theta})|^2
	\leq \eta_1 + {\eta}_2|E^bu|^2.
\end{equation}
It follows from Gronwall's inequality that
\begin{equation}
\label{e520}
	|E^bu(t_0+\rho e^{i\theta})|^2 \leq e^{\sqrt{2}\delta\eta_2}|E^bu(t_0)|^2+\frac{\eta_1}{\eta_2}(e^{\sqrt{2}\delta\eta_2}-1).
\end{equation}

Plugging (\ref{e516}), (\ref{e519}), (\ref{e521}) into (\ref{e520}), we obtain that
\begin{equation*}
	|E^bu(t_0+\rho e^{i\theta})|^2 \leq e^{M_1\delta\nu\ko^2}\left[\frac{4}{3}c_2\Rt{1}^2+c_0^{\frac{1}{3}}(c_1\Rt{1})^{\frac{2}{3}}\right]\nu^2+M_2(e^{M_1\delta\nu\ko^2}-1)\nu^2 = \Rt{new}^2\nu^2,
\end{equation*}
where $\Rt{new}$ is defined in (\ref{rtnew}).
Since $t_0\in\mathbb{R}$ and $\theta\in [-\pi/4, \pi/4]$ are arbitrary, it follows that 
(\ref{rnew}) holds for all $\zeta\in \mathcal{S}(\delta)$.
\end{proof}

\section{An Explicit Criterion}

In section 6, we found that generically $0$ is not in the attractor $\mathcal{A}_g$ since if $0\in \mathcal{A}_g$, then $g$ must be in the set $E_b$ which is of first category (See Theorem \ref{t73}). One immediately asks the following question: if $g \in E_b$, will $0\in \mathcal{A}_g$?  We partially answer this question by presenting a concrete criterion that is both sufficient and necessary for $0 \in \mathcal{A}_g$.

To present our result, we need some preparation. First, Theorem \ref{t63} tells us we can choose $\delta>0$ and $M>0$, such that for every $u_0\in \mathcal{A}$, $S(t)u_0$ is extendable to a holomorphic function on $\mathcal{S}(\delta)=\{z\in \mathbb{C}:|\Im z|<\delta\}$ with values in $E_b$, and $|S(t)u_0|_b\leq M$ for all $t \in \mathcal{S}(\delta)$.

Let $u_0=0 \in \mathcal{A}$; let $u(t)=S(t)u_0$ be the solution of the NSE; we use the conformal mapping (see \cite{FJK96})
\begin{align*}
\phi: \mathcal{S}(\delta) \rightarrow \Delta=\{T \in \mathbb{C}: |T|<1\}
\end{align*}
defined by the following formula
\begin{align*}
T=\phi(t)=\frac{\exp({\pi t}/{2 \delta})-1}{\exp({\pi t}/{2 \delta})+1}, \quad  t\in \mathcal{S}(\delta)
\end{align*}
with inverse given by
\begin{align*}
t=\phi^{-1}(T)=\frac{2\delta}{\pi}[\log(1+T)-\log(1-T)].
\end{align*}
The function $U(T)=u(t)$ satisfies the ODE
\begin{equation}
\label{ode4U}
 \frac{dU}{dT}=\delta_0\psi(T)\{g-\nu AU-B(U,U)\},    \quad T\in \Delta  \\ 
 \end{equation}
with initial value 
\begin{align*}
U(0)=u_0
\end{align*}
where 
\begin{align*}
\psi(T)=\frac{1}{2}\left( \frac{1}{1+T}+\frac{1}{1-T}\right)= \frac{1}{1-T^2}
\end{align*}
and $\delta_0={4\delta}/{\pi}$. 

By the analyticity of the function $U(T)$, we may express it in a Taylor series
\begin{equation}
\label{taylor}
U(T)=U_0+U_1T+U_2T^2+\cdots\;.
\end{equation}
Note that $U_0=u_0$. The convergence radius of the series (\ref{taylor}) is at least 1 if $u_0\in \mathcal{A}$, and it may be less than 1 if $u_0 \notin \mathcal{A}$.

Combining the series expansion form (\ref{taylor}) for $U(T)$  and the ODE (\ref{ode4U}), we get
\begin{align*}
\frac{d}{dT}(\sum_{n=0}^{\infty}U_nT^n)=\frac{\delta_0}{1-T^2}\left[g-\nu A \sum_{n=0}^{\infty}U_nT^n-
\sum_{n=0}^{\infty}\sum_{h+k=n}B(U_h, U_k)\right]
\end{align*} 
from which we get the following criterion for $0 \in \mathcal{A}_g$.
\begin{thm}
\label{theorem82}
$0 \in \mathcal{A}_g$ if and only if the Taylor series
\begin{equation}
\label{criterion series}
\sum_{n=0}^{\infty}U_n T^n, \quad T\in \Delta
\end{equation}
converges in $|\cdot|_b$ for all $T \in \Delta$ and the sum $U(T)=\sum_{n=0}^{\infty}U_n T^n$, for $|T|<1$, satisfies an estimate $|U(T)|_b\leq M$, for some $M>0$, where $U_n$ are computed recursively according to

\begin{align*}
U_0=0, \quad U_1=\delta_0 g, \quad U_2=-\frac{\nu \delta_0^2}{2}Ag \\
\end{align*}
and for $n\geq 2$
\begin{align}
\label{rec-relation}
U_{n+1}=\frac{n-1}{n+1}U_{n-1}-\frac{\nu \delta_0}{n+1}AU_n 
-\frac{\delta_0}{n+1}\sum_{\substack{h+k=n\\  h, k \geq1}}  B(U_k,U_h).
\end{align}
\end{thm}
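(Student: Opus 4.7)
The plan is to exploit the bijection (via the conformal map $\phi$) between bounded eternal trajectories of the NSE starting at $0$ and Taylor expansions on the unit disk $\Delta$ solving the transformed ODE \eqref{ode4U}, and then derive (or reverse-engineer) the recursion \eqref{rec-relation} by matching Taylor coefficients.

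For the forward direction, suppose $0\in\mathcal{A}_g$. By Theorem \ref{t63}, the trajectory $u(\zeta)=S(\zeta)\cdot 0$ extends to a holomorphic $E_b$-valued function on $\mathcal{S}(\delta)$ with $|u(\zeta)|_b\leq\Rt{new}\nu$. Composing with $\phi^{-1}$ gives a holomorphic $E_b$-valued function $U(T)=u(\phi^{-1}(T))$ on $\Delta$ with $|U(T)|_b\leq\Rt{new}\nu=:M$ and $U(0)=0$, so that the Taylor expansion \eqref{taylor} converges in $|\cdot|_b$ throughout $\Delta$. To extract the recursion, I would multiply \eqref{ode4U} through by $1-T^2$, substitute \eqref{taylor} together with the Cauchy product $B(U,U)=\sum_{n\geq 0}\bigl(\sum_{h+k=n}B(U_h,U_k)\bigr)T^n$, and identify coefficients of $T^m$. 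The coefficient of $T^0$ yields $U_1=\delta_0 g$; that of $T^1$ yields $2U_2=-\delta_0\nu AU_1$ (the bilinear contributions vanish since $U_0=0$), whence $U_2=-\tfrac{\nu\delta_0^2}{2}Ag$; and for $m\geq 2$ the identity
\begin{equation*}
(m+1)U_{m+1}-(m-1)U_{m-1}=-\delta_0\nu AU_m-\delta_0\sum_{\substack{h+k=m\\h,k\geq 1}} B(U_h,U_k)
\end{equation*}
is exactly \eqref{rec-relation}.

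For the converse, assume that the $U_n$ produced by the recursion give rise to a series $U(T):=\sum_n U_n T^n$ that converges in $|\cdot|_b$ on $\Delta$ with uniform bound $M$. Then $U$ is a holomorphic $E_b$-valued function on $\Delta$, and running the coefficient matching in reverse shows that $U$ satisfies the transformed ODE \eqref{ode4U}. Composing with $\phi$ yields $u(\zeta):=U(\phi(\zeta))$, a holomorphic $E_b$-valued function on $\mathcal{S}(\delta)$ solving the complexified NSE \eqref{cnse} with $u(0)=U_0=0$. Restricting to real times $t\in\mathbb{R}=\phi^{-1}((-1,1))$ yields a global-in-time solution with $|u(t)|\leq|u(t)|_b\leq M$; the reality of all $U_n$ (propagated from $g$ via the recursion) guarantees that $u(t)$ is real-valued on $\mathbb{R}$. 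By the characterization of $\mathcal{A}_g$ as the set of initial data of bounded eternal orbits, $0=u(0)\in\mathcal{A}_g$.

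The main technical obstacle is to legitimize the coefficient matching as an honest analytic identity in $E_b$, not merely a formal power-series manipulation. In particular, for the reverse direction one must verify that the Cauchy product $\sum_n\sum_{h+k=n}B(U_h,U_k)T^n$ converges absolutely in $E_b$ on every disk $|T|\leq r<1$. I would combine the Cauchy estimates $|U_n|_b\leq Mr^{-n}$ (valid on every such disk by the uniform bound on $U$) with the bilinear estimate of Lemma \ref{l55}, which controls $|B(u,v)|_b$ in terms of finite Gevrey-type norms of $u$ and $v$; this yields geometric control of $|B(U_h,U_k)|_b$, and dominated convergence then upgrades the formal matching of Taylor coefficients to the genuine ODE \eqref{ode4U} as an identity in $E_b$.
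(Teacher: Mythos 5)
Your argument is correct and is essentially the paper's own: the paper likewise passes from the strip $\mathcal{S}(\delta)$ to the disk $\Delta$ via the conformal map, uses Theorem \ref{t63} to get boundedness of $|U(T)|_b$, derives the recursion \eqref{rec-relation} by matching Taylor coefficients in the transformed ODE \eqref{ode4U}, and for the converse invokes the characterization of $\mathcal{A}_g$ as the set of initial data of bounded eternal orbits. If anything you are more careful than the paper, which leaves the analytic justification of the coefficient matching (convergence of the Cauchy product for $B(U,U)$, reality of the restriction to $\mathbb{R}$) implicit; your use of Cauchy estimates together with Lemma \ref{l55}, or alternatively the bound \eqref{e29} showing $|\cdot|_b$ dominates every $|A^{m}\cdot|$, legitimately fills that gap.
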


\begin{remark}
Several remarks are in order.

1. Notice that all the $U_n$'s defined in the Theorem \ref{theorem82} depend only on g.

2. The application of the criterion given in Theorem \ref{theorem82} does not seem to be an easy 
task in general. We illustrate its use in the next section in the special case of forcing a single eigenvector of $A$. 
\end{remark}

\section{The case of Kolmogorov forcing}

We now use the criterion given in Theorem \ref{theorem82} to show that if the force $g \neq 0$ is an eigenvector of the Stokes operator $A$, with corresponding eigenvalue $\lambda > 0$, then 0 cannot be in $\mathcal{A}_g$.

If $0 \in \mathcal{A}_g$, where $Ag=\lambda g$, then noting that $B(g,g)=0$, the following lemma immediately follows from the the recursive relation (\ref{rec-relation}) given in Theorem \ref{theorem82}.

\begin{lem}
\label{ap_lem1}
For the coefficients $U_n$, we have 
\begin{equation*}
U_n=p_n(\lambda)g,  \quad n=1, 2, 3, \cdots, 
\end{equation*}
where $p_n(\cdot)$ are polynomials satisfying the following relations:
\begin{align}
	p_1(\lambda) =& \delta_0,\label{p1}\\
	p_2(\lambda) =& -\frac{\nu}{2} \lambda \delta_0^2,\label{p2}\\
	p_{N+1}(\lambda) =& \frac{N-1}{N+1}p_{N-1}(\lambda)-\frac{\nu \delta_0 \lambda}{N+1}p_{N}(\lambda), N=2, 3, \cdots\label{pn}.	
\end{align}
\end{lem}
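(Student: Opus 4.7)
The plan is a straightforward induction on $n$. The decisive structural input is that $B(g,g)=0$ for an eigenfunction $g$ of the Stokes operator---the fact the paper invokes just before stating the lemma---and this single identity is what causes every bilinear term in the recursion (\ref{rec-relation}) to collapse once one assumes inductively that the lower-order $U_k$ are scalar multiples of $g$.

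For the base cases, I would read off $p_1$ and $p_2$ directly from the explicit expressions $U_1=\delta_0 g$ and $U_2=-\tfrac{\nu\delta_0^2}{2}Ag$ given in Theorem \ref{theorem82}, using $Ag=\lambda g$; this matches (\ref{p1}) and (\ref{p2}).

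For the inductive step, assuming $U_k=p_k(\lambda)g$ for all $1\leq k\leq N$, each summand $B(U_k,U_h)=p_k(\lambda)p_h(\lambda)B(g,g)$ in (\ref{rec-relation}) vanishes, while $AU_N=\lambda p_N(\lambda)g$. Substituting into (\ref{rec-relation}) yields
\[
U_{N+1}=\left[\tfrac{N-1}{N+1}p_{N-1}(\lambda)-\tfrac{\nu\delta_0\lambda}{N+1}p_N(\lambda)\right]g,
\]
which delivers the recursion (\ref{pn}) for $p_{N+1}$. That each $p_n$ is indeed a polynomial in $\lambda$ then follows by the same induction from (\ref{p1})--(\ref{pn}).

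I do not anticipate any real obstacle in proving the lemma itself; it is essentially a bookkeeping consequence of the recursion once the identity $B(g,g)=0$ is invoked. The genuine difficulty of the section will come afterwards, when these polynomials $p_n(\lambda)$ are to be used---via the convergence/boundedness criterion of Theorem \ref{theorem82}---to rule out $0\in\mathcal{A}_g$ in the Kolmogorov case, but that is work beyond the lemma stated here.
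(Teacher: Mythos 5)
Your proposal is correct and follows exactly the paper's own argument: base cases read off from the explicit formulas for $U_1$ and $U_2$ in Theorem \ref{theorem82}, followed by induction on $N$ in which $B(U_k,U_h)=p_k(\lambda)p_h(\lambda)B(g,g)=0$ kills the bilinear sum and $AU_N=\lambda p_N(\lambda)g$ yields the recursion (\ref{pn}). No differences of substance.
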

\begin{proof}
By  Theorem \ref{theorem82}, we can obtain (\ref{p1}) and (\ref{p2}) easily.
Assume by induction that $U_n=p_n(\lambda)g$ is valid for all $n \leq N$, where $N\geq 2$. Then by (\ref{rec-relation}),
\begin{align*}
(N+1)U_{N+1} &=(N-1)U_{N-1}-\nu \delta_0 AU_N-\delta_0 \sum_{h+k=N}B(U_k, U_h)\\
 &=(N-1)p_{N-1}(\lambda)g-\nu \delta_0 \lambda p_N(\lambda)g-\delta_0 \sum_{h+k=N}p_h(\lambda)p_k(\lambda)B(g,g)\\
     &=(N-1)p_{N-1}(\lambda)g- \nu \delta_0 \lambda p_N(\lambda)g.\\
\end{align*}
Therefore,
\begin{equation*}
U_{N+1}=p_{N+1}(\lambda)g,
\end{equation*}
where,
\begin{equation*}
p_{N+1}(\lambda)=\frac{N-1}{N+1}p_{N-1}(\lambda)-\frac{\nu \delta_0 \lambda}{N+1}p_{N}(\lambda).
\end{equation*}
The proof is completed by the induction hypothesis.
\end{proof}

From the above lemma and Theorem \ref{theorem82}, we conclude that if $0 \in \mathcal{A}_{g}$, then the solution $u(t)$ is of a special form, namely, $u(t)=\phi(t)g$, where $\phi(t)$ is a bounded real-valued function on 
$\mathbb{R}$. Clearly the function $\phi(t)$ must satisfy the following ODE:
\begin{equation*}
\frac{d\phi}{dt}+\nu \lambda\phi=1,
\end{equation*}
from which it follows that
\begin{equation*}
\phi(t)=\frac{1}{\nu \lambda}+(\phi(0)-\frac{1}{\nu \lambda})e^{-\nu \lambda t}.
\end{equation*}

Boundedness of the solution $u(t)$ for all negative time implies that $\phi(0)=\frac{1}{\nu \lambda}$, and hence $u(t)\equiv \frac{g}{\nu \lambda}$. This contradicts $u(0)=\phi(0)g=0$.
Therefore, in this case, using the criterion and dynamics analysis, we obtain that $0$ is not in $\mathcal{A}_{g}$.
\section{Appendix}
The bounds in Theorem \ref{t64} are found recursively, starting with 
\begin{equation*}
\Rt{1} = \sqrt{2}G\;, \quad \Rt{2} = \left(\frac{3(\sqrt{2}\cdot 16^2\cdot 24^6c_L^{16})^{2/3}}{4(2c_L^2+c_A)^{4/3}}G^6+4 R_{2}^2\right)^{\frac{1}{2}}\;, \quad \Rt{3} = 4\frac{N_2^{\frac{1}{2}}}{\nu^{\frac{1}{2}}\ko\delta_3^{\frac{1}{2}}},
\end{equation*} 
where
\begin{equation*}
R_2 = 2137c_L^4G^3\;, \quad 
N_2=R_{2}^2+\frac{2\delta_2\Rt{1}^2}{\nu\ko^2\delta_1^2}+16(2c_L^2+c_A)^2\nu\ko^2\delta_2\Rt{1}\Rt{2}^3.
\end{equation*}
The  width of the strip of analyticity is estimated from below in each case as
\begin{align*}
\delta_1 = \frac{1}{16\cdot 24^3 c_L^8 \nu \ko^2 G^4}\;,
\end{align*}
\begin{align*}
\delta_2 &= \min\left\{\delta_1, 16^{-1}\left[ (2c_L^2+c_A)^{\frac{8}{3}}\Rt{1}^{\frac{8}{3}}\left(\frac{\nu\ko^2}{8\delta_1^2}\right)^{\frac{2}{3}}+(2c_L^2+c_A)^4(\nu\ko^2)^2\Rt{1}^2 R_2^2\right]^{-\frac{1}{2}}\right\}, \quad \delta_3=\frac{\delta_2}{2}\;.
\end{align*}
The other constants are given by
	\begin{align*}
C(g)=C_1C_2\Rt{3}^2\beta_2^{-19/2}\;,\quad	\beta_1=e^{2\sqrt{2}\nu\ko^2C_3\delta_\alpha}\;, \quad \beta_2=\max\{\frac{72\sqrt{2}}{\pi^2},c_A^2\Rt{1}\Rt{2}\}\;, 
	\end{align*}
where
	\begin{align*}
	C_1:=\prod_{\gamma=3}^{\infty}(1+\epsilon_\gamma)\;,\quad
	C_2:=3^32^{-7}c_L^8\Rt{1}^2\prod_{\gamma=3}^{\infty}(1+\eta_\gamma)
	\;, \quad C_3:={4}\left(2^{\frac{5}{2}}c_A^2\Rt{1}\Rt{2}+2^{\frac{1}{2}}c_A\sqrt{\Rt{1}\Rt{3}}\right)	\;,
	\end{align*}
	\begin{equation*}
\epsilon_\gamma=\frac{1}{2\sqrt{2}\Gamma_{\gamma}\delta_\alpha\nu\ko^2}+\frac{\sqrt{2}}{\Gamma_{\gamma}\nu^2\ko^4\delta_\alpha^2}+\frac{\pi^2}{72\nu^2\ko^4\delta_\alpha^4\Gamma_{\gamma}\Gamma_{\gamma+1}}\;,
 \quad \eta_\gamma=\frac{\sqrt{\Rt{1}\Rt{3}}}{2^{\gamma+2}c_A\Rt{1}\Rt{2}},		
	\end{equation*}
and 
	\begin{equation*}
	\Gamma_\gamma:=2^{\gamma+\frac{3}{2}} c_A\left(2^{\gamma+2}c_A\Rt{1}\Rt{2}+\sqrt{\Rt{1}\Rt{3}}\right).
	\end{equation*}

\begin{bibdiv}
\begin{biblist}

\bib{CHEN94}{article}{
      author={Chen, W.},
       title={{New a priori estimates in Gevrey class of regularity for weak
  solutions of 3D Navier-Stokes equations.}},
        date={1994},
     journal={Differ. Integral Equ.},
      volume={7},
      number={1},
       pages={101\ndash 107},
}

\bib{PC03}{book}{
      author={Constantin, P.},
       title={{Private Communication}},
        date={2004},
}

\bib{CF89}{book}{
      author={Constantin, P.},
      author={Foias, C.},
       title={{Navier-Stokes Equations}},
      series={Chicago Lectures in Mathematics},
   publisher={University of Chicago Press},
        date={1989},
        ISBN={9780226115498},
}

\bib{DFJ05}{article}{
      author={{Dascaliuc}, R.},
      author={{Foias}, C.},
      author={{Jolly}, M.~S.},
       title={{Relations Between Energy and Enstrophy on the Global Attractor
  of the 2-D Navier-Stokes Equations}},
        date={2005},
     journal={Journal of Dynamics and Differential Equations},
      volume={17},
       pages={643\ndash 736},
}

\bib{FHN07}{article}{
      author={{Foias}, C.},
      author={{Hoang}, L.},
      author={{Nicolaenko}, B.},
       title={{On the helicity in 3-D periodic Navier-Stokes Equations I: The
  nonstatistical case}},
        date={2007},
     journal={Proc. London Math. Soc.},
      volume={94},
      number={1},
       pages={53\ndash 90},
}

\bib{FJLRYZ13}{article}{
      author={{Foias}, C.},
      author={{Jolly}, M.~S.},
      author={{Lan}, R.~M.},
      author={{Rupam}, R.},
      author={{Yang}, Y.},
      author={{Zhang}, B.~S.},
       title={{Time analyticity with higher norm estimates for the 2D
  Navier-Stokes equations}},
        date={2013},
     journal={arXiv:1312.0929},
}

\bib{FJMR}{article}{
      author={Foias, C.},
      author={Jolly, M.~S.},
      author={Manley, O.~P.},
      author={Rosa, R.},
       title={Statistical estimates for the {N}avier-{S}tokes equations and the
  {K}raichnan theory of 2-{D} fully developed turbulence},
        date={2002},
        ISSN={0022-4715},
     journal={J. Statist. Phys.},
      volume={108},
      number={3-4},
       pages={591\ndash 645},
         url={http://dx.doi.org/10.1023/A:1015782025005},
      review={\MR{1914189 (2004k:76067)}},
}

\bib{FJK96}{article}{
      author={{Foias}, C.},
      author={{Jolly}, M.S.},
      author={{Kukavica}, I.},
       title={Localization of attractors by their analytic properties},
        date={1996},
     journal={Nonlinearity},
      volume={9},
      number={6},
       pages={15\ndash 65},
}

\bib{T95}{book}{
      author={Temam, R.},
       title={{Navier-Stokes Equations and Nonlinear Functional Analysis}},
      series={CBMS-NSF Regional Conference Series in Applied Mathematical},
   publisher={Philadelphia : Society for Industrial and Applied Mathematics},
        date={1983},
}

\bib{T97}{book}{
      author={Temam, R.},
       title={Infinite dimensonal dynamical systems in mechanics and physics},
      series={Applied Mathematical Sciences},
   publisher={Springer},
        date={1997},
}

\end{biblist}
\end{bibdiv}

\end{document}